\documentclass[11pt]{article}

\usepackage[a4paper,margin=1in]{geometry}
\usepackage{amsmath,amssymb,amsthm,mathtools}
\usepackage{enumitem}
\usepackage{microtype}
\usepackage{hyperref}
\hypersetup{colorlinks=true,linkcolor=black,citecolor=black,urlcolor=black}

\newcommand{\set}[1]{\left\{#1\right\}}

\newcommand{\ceil}[1]{\left\lceil #1\right\rceil}
\newcommand{\floor}[1]{\left\lfloor #1\right\rfloor}

\newtheorem{theorem}{Theorem}[section]
\newtheorem{lemma}[theorem]{Lemma}

\newtheorem{fact}[theorem]{Fact}
\theoremstyle{remark}
\newtheorem{remark}[theorem]{Remark}

\title{Hamiltonian cycles in 7-tough $(P_4\cup P_1)$-free graphs\thanks{This work is supported by the National Natural Science Foundation of China (Nos. 12371348 and 12201258) and the High-Quality Science and Technology Cultivation Project of Jiangsu Normal University (No. JSNUGZL2026069).}}
\author{Yong Lu,\thanks{Corresponding author.}\quad Qi Wu,\quad Qiannan Zhou\\[2mm]
\small School of Mathematics and Statistics, Jiangsu Normal University,\\
\small Xuzhou, Jiangsu 221116, People's Republic of China\\
\small E-mails: \texttt{luyong@jsnu.edu.cn}, \texttt{wuqimath@163.com}, \texttt{qnzhoumath@163.com}}
\date{}

\begin{document}
\maketitle

\begin{abstract}
Shan~[J. Graph Theory  (2026)] proved that every 23-tough $(P_4\cup P_1)$-free graph on at least three vertices is Hamiltonian. We improve this bound to 7 by replacing the final cut analysis in Shan's framework with an asymmetric separation criterion and a cograph covering lemma.

\medskip
\noindent\textbf{Keywords:} Hamiltonian cycle; toughness;  cograph; asymmetric separation criterion.

\noindent\textbf{AMS Subject Classification (2020):} 05C45.
\end{abstract}

\section{Introduction}

A graph is Hamiltonian if it contains a spanning cycle. A graph is called $H$-free if it contains no induced subgraph isomorphic to $H$. Classical sufficient conditions for Hamiltonicity, such as Dirac's minimum-degree theorem and the Chv\'atal--Erd\H{o}s connectivity condition, are expressed through local or semi-local graph parameters. Toughness, introduced by Chv\'atal in 1973~\cite{Chvatal1973}, measures instead the resistance of a graph to vertex separation. For a graph $G$ and a set $X\subseteq V(G)$, let $c(G-X)$ denote the number of connected components of $G-X$. The toughness of $G$ is defined as
\[
\tau(G)=\min\set{\frac{|X|}{c(G-X)}:X\subseteq V(G),\ c(G-X)\ge 2},
\]
with the convention that $\tau(G)=\infty$ if $G$ is complete. For a real number $t\ge 0$, a graph is $t$-tough if $\tau(G)\ge t$.

Every Hamiltonian graph is 1-tough. Chv\'atal conjectured that an absolute constant $t_0$ exists such that every $t_0$-tough graph is Hamiltonian. The conjecture remains open, and the constant, if it exists, is at least $9/4$ by the construction of Bauer, Broersma, and Veldman~\cite{BBV2000}. Surveys of toughness and its relation to Hamiltonicity may be found in~\cite{BBS2006,Broersma2015}.

One productive approach is to restrict the induced subgraphs that may occur. The class of $2K_2$-free graphs has been studied in~\cite{BPP2014,OtaSanka2022,Shan2020}, and related Hamiltonicity results for graphs excluding small linear forests appear in~\cite{GaoShan2022,Shan2021,ShanTanyel2025,ShiShan2022,XuLiZhou2024}. In these classes, toughness can be combined with a concrete structural decomposition and with path-cover arguments.

The graph $P_4\cup P_1$ is a natural boundary case. A $P_4$-free graph is a cograph, and Jung proved that every 1-tough cograph on at least three vertices is Hamiltonian~\cite{Jung1978}. Stronger spanning structures in cographs were considered in~\cite{EllinghamEtAl2020}. Allowing a single vertex to be isolated from an induced $P_4$ makes the problem substantially more difficult. Nikoghosyan conjectured that every 1-tough $(P_4\cup P_1)$-free graph is Hamiltonian~\cite{Nikoghosyan2013}, and the same class appeared as the remaining case in the forbidden-subgraph analysis of Li, Broersma, and Zhang~\cite{LiBroersmaZhang2016}. Very recently, Cao, Chen, and Zheng~\cite{CaoChenZheng2026} confirmed this conjecture for minimally 1-tough graphs with toughness exactly 1 whose every edge deletion lowers the toughness by proving that every such graph is one of $C_{4}$, $C_{5}$  or $C_{6}$.

Shan obtained the first absolute bound, proving that every 23-tough $(P_4\cup P_1)$-free graph is Hamiltonian~\cite{Shan2026}. The proof separates the vertices of degree below $n/4$, covers the induced cograph on those vertices by matched paths, compresses the paths to prescribed edges, and finds a suitable cycle in the remaining high-degree graph. The main numerical loss occurs when the compressed auxiliary graph has a small cut.

The present paper treats that cut directly. We first prove an asymmetric two-side criterion (Lemma \ref{lem:asymmetric}), in which one side of a separation has order roughly $n/8$ while the other has order roughly $n/4$. We next show that every prescribed cograph vertex set is contained in a cycle (Lemma \ref{lem:cograph-cover}). A small cut in the auxiliary graph then produces a larger prescribed cograph core (Lemma \ref{lem:enlargement}). Each enlargement adds at least
\[
\ceil{\frac n4}-\ceil{\frac n7}
\]
vertices. Two enlargements reach the long-cycle threshold $\ceil{n/7}$,  at which point Lemma~\ref{lem:long-completion} completes the cycle to a Hamiltonian cycle.

Our main result is the following.  

\begin{theorem}\label{thm:main}
Every 7-tough $(P_4\cup P_1)$-free graph on at least three vertices is Hamiltonian.
\end{theorem}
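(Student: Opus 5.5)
We follow Shan's framework and replace its final cut analysis by the three new lemmas announced in the introduction. Let $G$ be a 7-tough $(P_4\cup P_1)$-free graph on $n\ge 3$ vertices. \textbf{Reduction to large $n$.} If $G$ is complete, it is Hamiltonian. Otherwise $G$ has a separating set $X$ with $|X|\ge 14$, so $\delta(G)\ge 14$ and $n$ is at least moderately large. Any remaining small values of $n$ would be handled by the degree bound $\delta\ge 2\tau$ together with Dirac's theorem whenever $\delta\ge n/2$. So we may assume $n$ is large enough for the ceiling estimates below.

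\textbf{Setup.} Let $L$ be the set of vertices of degree less than $n/4$ and $H=V(G)\setminus L$. We first check that $G[L]$ is $P_4$-free. If $G[L]$ contained an induced $P_4$, then a vertex of $G$ nonadjacent to all four of its vertices would produce an induced $P_4\cup P_1$. Such a vertex exists because the four vertices have total degree below $n$. Hence $G[L]$ is a cograph. Following Shan, we cover $G[L]$ by vertex-disjoint paths whose endpoints are matched into $H$; toughness $7$ makes the required Hall-type counting work. We then compress each path to a prescribed edge and obtain an auxiliary graph on the high-degree side. If this auxiliary graph has no small cut, Shan's argument directly yields a Hamiltonian cycle through the prescribed edges, and expanding the compressed edges gives a Hamiltonian cycle of $G$.

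\textbf{Cut case: iterate enlargement.} Suppose the auxiliary graph has a small cut. Lemma~\ref{lem:asymmetric} then forces a separation in $G$ with one side of order about $n/8$ and the other about $n/4$. Lemma~\ref{lem:enlargement} converts this into a larger prescribed set $C'\supseteq C$ that still induces a cograph and satisfies
\[
|C'|\ge |C|+\ceil{\frac n4}-\ceil{\frac n7}.
\]
We restart with $C'$ as the prescribed core. By Lemma~\ref{lem:cograph-cover}, $C'$ lies on a cycle of $G$. We then repeat the dichotomy: either there is no small cut and we finish as above, or we enlarge again. After at most two enlargements the core has size at least
\[
2\Bigl(\ceil{\frac n4}-\ceil{\frac n7}\Bigr)\ge \ceil{\frac n7}.
\]
This inequality holds for $n$ beyond a small constant, since $2(n/4-n/7-1)\ge n/7+1$ for $n\ge 14$ or so. At that point the cycle through the core has length at least $\ceil{n/7}$, and Lemma~\ref{lem:long-completion} extends it to a Hamiltonian cycle.

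\textbf{Main obstacle.} I expect the hardest step to be verifying that each enlargement preserves the hypotheses needed to reapply the framework. The enlarged core must remain a cograph that the matched-path covering can handle, and toughness $7$ must still suffice for the Hall-type counting after the prescribed set grows. This is where the specific constant $7$ enters, through the gap $\ceil{n/4}-\ceil{n/7}$. I would first check it by tracking the parameters in Lemma~\ref{lem:enlargement} against the threshold in Lemma~\ref{lem:long-completion}, and by separately treating the small residual $n$ where the ceilings break the linear estimate.
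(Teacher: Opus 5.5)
Your outline follows the paper's architecture: the low-degree set $T$ induces a cograph, paths are compressed to marked edges, Lemma~\ref{lem:enlargement} is applied twice, $2b\ge\lambda$ is used, and Lemmas~\ref{lem:cograph-cover} and~\ref{lem:long-completion} finish. However, the case analysis that drives the iteration is set up wrongly, and the step you defer as the ``main obstacle'' is exactly the missing bookkeeping.

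When $R=G[V(G)\setminus Z]$ is $(q+2)$-connected, toughness $7$ does \emph{not} give a Hamiltonian cycle directly. Lemma~\ref{lem:HTW} only gives a cycle through the marked edges whose expansion has order at least $2h-z-q$. This is at least $\lambda=\ceil{n/7}$ only because $z+q\le\lambda$. The paper gets that inequality from Lemma~\ref{lem:scattering-cycle} together with the assumption that no cycle of order at least $\lambda$ contains the core $Z$. The long cycle is then completed by Lemma~\ref{lem:long-completion}.

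So the dichotomy at each stage should not be ``small cut versus no small cut''; that split lives inside the proof of Lemma~\ref{lem:enlargement}. It should be: is there a cycle of order at least $\lambda$ through the current core? If yes, Lemma~\ref{lem:long-completion} finishes, because the core contains $T$. If no, this is exactly hypothesis~(v) of Lemma~\ref{lem:enlargement}. Hypothesis~(iv), $|W|<\mu$, is only guaranteed to survive an enlargement that starts from $W=\varnothing$. Starting from $(P_0,B_0,W_0)=(\varnothing,S,\varnothing)$ you therefore get exactly two legitimate applications, not an open-ended iteration, which is why everything hinges on $2b\ge\lambda$.

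Your worry about the Hall-type counting is unfounded: Lemma~\ref{lem:compression} needs only global toughness, $G[Z]$ $P_4$-free, and $|V(G)\setminus Z|\ge 2$. Also, Lemma~\ref{lem:asymmetric} is a sufficient condition for Hamiltonicity, not a source of separations. In the cut case its contrapositive yields a component $A$ with $|A|\le n/8-1$, which Lemma~\ref{lem:separation-obstruction} lets you add to the core.

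Your reduction to large $n$ also fails as stated. From $\delta(G)\ge 14$, Dirac's theorem covers only $n\le 28$. Lemma~\ref{lem:enlargement} and the inequalities \eqref{eq:constants}, however, are proved under the standing assumption $n>112$. Your own estimate is also off: $2(n/4-n/7-1)\ge n/7+1$ needs $n\ge 42$, not $n\ge 14$. The missing ingredient is the bound \eqref{eq:alpha}, $\alpha(G)\le n/8$, which together with $\kappa(G)\ge 14$ lets Lemma~\ref{lem:CE} dispose of every $n\le 112$.

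Finally, the case $T=\varnothing$ must be handled separately, since Lemma~\ref{lem:compression} needs $G-S$ nonempty and Lemma~\ref{lem:enlargement} needs $Z\ne\varnothing$. There the paper uses Lemma~\ref{lem:CE} when $\alpha(G)\le 2$. Otherwise it applies Lemma~\ref{lem:HTW} with the empty forest, giving a cycle of order at least $\min\{n,2h\}$, and then Lemma~\ref{lem:long-completion}.
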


Section~2 records the auxiliary results and the path-cover compression. Section~3 contains the asymmetric separation criterion, the cograph covering lemma, the long-cycle completion lemma, and the cograph-core enlargement argument. The main theorem is proved in Section~4.

\section{Preliminaries}

All graphs are finite and simple. For a graph $G$, we write $V(G)$ and $E(G)$ for its vertex and edge sets, $\delta(G)$ for its minimum degree, $\alpha(G)$ for its independence number, and $\kappa(G)$ for its vertex-connectivity. For $v\in V(G)$ and $X\subseteq V(G)$, let
\[
N_G(v,X)=N_G(v)\cap X,\qquad d_G(v,X)=|N_G(v,X)|.
\]
We abbreviate $N_G(v,V(G))$ and $d_G(v,V(G))$ to $N_G(v)$ and $d_G(v)$. For a set $A\subseteq V(G)$, define its open neighborhood by
\[
N_G(A)=\left(\bigcup_{a\in A}N_G(a)\right)\setminus A
      =\{v\in V(G)\setminus A:N_G(v)\cap A\ne\varnothing\}.
\]
For $X\subseteq V(G)$, the subgraph induced by $X$ is denoted by $G[X]$.

For integers $p,q$, write $[p,q]=\{i\in\mathbb Z:p\le i\le q\}$; this set is empty when $p>q$. A set $X\subseteq V(J)$ is a cutset of a graph $J$ if $c(J-X)\ge2$. Under this convention, the empty set is a cutset when $J$ is disconnected. A cutset is minimal if none of its proper subsets is a cutset. If $J$ is connected and non-complete, a minimum vertex cut is a cutset of cardinality $\kappa(J)$.

Let $S\subseteq V(G)$, and let $D_1,\ldots,D_\ell$ be the components of $G-S$. Following Shan~\cite{Shan2026}, we say that $G$ is $t$-tough with respect to $S$ if
\[
|W|\ge t\,c(G-W)
\]
for every cutset $W$ of $G$ such that $V(D_i)\setminus W\ne\varnothing$ for all $i\in[1,\ell]$. Global $t$-toughness plainly implies $t$-toughness with respect to every set $S$.

A linear forest is a graph whose components are paths; isolated vertices are allowed unless stated otherwise. Following~\cite[Definition 2.10]{Shan2026}, a path-cover $Q$ of a graph $H$ is a union of pairwise vertex-disjoint paths such that $V(H)\subseteq V(Q)$. Thus a path-cover may use vertices outside $H$. It is spanning in $H$ when $V(Q)=V(H)$.

We use Shan's notion of an $S$-matched path-cover as follows. If $H\subseteq G-S$, an $S$-matched path-cover of $H$ is a path-cover $Q$ satisfying
\[
V(H)\subseteq V(Q)\subseteq V(H)\cup S,
\]
such that both endvertices of every component of $Q$ lie in $S$. It is an $S$-matched basic path-cover if no two $S$-vertices are adjacent in $Q$ and $Q-S$ is a basic path-cover of $H$.

Define
\[
\sigma_3(G)=
\begin{cases}
\min\{d_G(x)+d_G(y)+d_G(z):\{x,y,z\}\text{ is independent}\},&\alpha(G)\ge3,\\
\infty,&\alpha(G)\le2.
\end{cases}
\]

For a non-complete graph $H$, its scattering number is
\[
s(H)=\max\{c(H-U)-|U|:U\subseteq V(H),\ c(H-U)\ge2\}.
\]
For a complete graph we set $s(H)=-\infty$.

For a non-complete graph $H$, a set $W\subseteq V(H)$ is a \emph{scattering set} if
\[
c(H-W)-|W|=s(H)\qquad\text{and}\qquad c(H-W)\ge2.
\]
A scattering set is \emph{maximal} if it is maximal under inclusion among the scattering sets of $H$. If $X$ is a cutset of a graph, a vertex $z\in X$ is called a \emph{minimal element of $X$} if $z$ belongs to some minimal cutset contained in $X$.

To recall the part of Shan's definition used below, let $H$ be the union of some components of $G-S$, and put $W=\varnothing$ when $s(H)\le0$ and otherwise let $W$ be a maximal scattering set of $H$. A spanning path-cover $Q$ of $H$ with components $R_1,\ldots,R_k$ is a \emph{basic path-cover} if $k=\max\{1,s(H)\}$, the vertex set $V(R_1)$ consists of all vertices of $W$ together with all vertices of $|W|+1$ components of $H-W$, and, for every $i\in[2,k]$, the induced graph $H[V(R_i)]$ is a component of $H-W$.
We use repeatedly the following elementary consequences of toughness. If $G$ is a non-complete $t$-tough graph, then every cutset has order at least $2t$. Hence
\[
\kappa(G)\ge2t,\qquad \delta(G)\ge2t,\qquad |V(G)|\ge2t+1.
\]
Moreover, if $I$ is an independent set of order at least two, then toughness applied to $V(G)\setminus I$ gives
\[
|V(G)|-|I|\ge t|I|.
\]
Consequently, for an $n$-vertex non-complete $t$-tough graph,
\begin{equation}\label{eq:alpha}
\alpha(G)\le\frac{n}{t+1}.
\end{equation}

We first recall the Chv\'atal--Erd\H{o}s theorem.

\begin{lemma}[Chv\'atal--Erd\H{o}s~\cite{ChvatalErdos1972}]\label{lem:CE}
Every graph $J$ on at least three vertices satisfying $\kappa(J)\ge\alpha(J)$ is Hamiltonian.
\end{lemma}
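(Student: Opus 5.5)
The statement is the classical Chv\'atal--Erd\H{o}s theorem. I would prove it by the standard longest-cycle argument, taking a longest cycle and producing an independent set of order $\kappa(J)+1$ if that cycle is not Hamiltonian.

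First I dispose of degenerate cases. If $\alpha(J)=1$, then $J$ is complete on at least three vertices and is trivially Hamiltonian. Otherwise $\kappa(J)\ge\alpha(J)\ge2$, so $J$ is $2$-connected and contains a cycle. Let $C$ be a longest cycle of $J$ with a fixed orientation, and write $x^+$ for the successor of $x$ on $C$. Suppose $C$ is not Hamiltonian, and let $H$ be a component of $J-V(C)$. Every neighbour of $H$ outside $H$ lies on $C$. Put $N(H)=\{x_1,\dots,x_k\}\subseteq V(C)$, listed in the cyclic order of $C$.

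The key structural steps are the following, carried out in order.
\begin{enumerate}[label=(\roman*)]
\item No $x_i^+$ lies in $N(H)$. Indeed, if $x_i$ and $x_i^+$ both have neighbours in $H$, I would replace the edge $x_ix_i^+$ by a path through $H$, which gives a longer cycle.
\item Consequently $V(J)\setminus(V(H)\cup N(H))\supseteq\{x_1^+\}\ne\varnothing$. Hence $N(H)$ is a cutset separating $H$ from $x_1^+$, and so $k\ge\kappa(J)$.
\item The successors $x_1^+,\dots,x_k^+$ are distinct, since the $x_i$ are distinct, and none of them is adjacent to a vertex of $H$, by (i).
\item The successors are pairwise nonadjacent. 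This is the main obstacle and requires the usual rerouting. Suppose $x_i^+x_j^+\in E(J)$ with $i<j$. Take a path $P$ through $H$ joining $x_i$ to $x_j$. Then the cycle
\[
x_i\,P\,x_j\,\overleftarrow{C}\,x_i^+\,x_j^+\,\overrightarrow{C}\,x_i
\]
traverses the segment from $x_j$ back to $x_i^+$ in reverse, jumps along the edge $x_i^+x_j^+$, and continues forward from $x_j^+$ around to $x_i$. It contains all vertices of $C$ together with at least one vertex of $H$, contradicting maximality. I would check the orientation carefully so that every vertex of $C$ is used exactly once.
\end{enumerate}

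Finally, choose any $v\in V(H)$. Then $\{v,x_1^+,\dots,x_k^+\}$ is independent of order $k+1\ge\kappa(J)+1>\alpha(J)$, which is a contradiction. Hence $C$ is Hamiltonian.
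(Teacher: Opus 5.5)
Your argument is correct: it is the classical longest-cycle proof of Chv\'atal and Erd\H{o}s, and the paper does not reprove this lemma but only cites their result, so there is no different route to compare. The one point you leave implicit, that $N(H)\neq\varnothing$ so that $x_1^+$ exists in step (ii), follows at once from the connectivity of $J$.
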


The next lemma is the cycle-extension argument used throughout the paper.

\begin{lemma}[{Shan~\cite[Lemma~2.16]{Shan2026}}]\label{lem:cycle-extension}
Let $t>0$, let $G$ be a $t$-tough graph on $n$ vertices, and let $C$ be a non-Hamiltonian cycle of $G$. If $D$ is a connected subgraph of $G-V(C)$ such that
\[
|N_G(V(D))\cap V(C)|>\frac{n}{t+1}-1,
\]
then $G$ has a cycle $C'$ with
\[
V(C)\subsetneq V(C')\qquad\text{and}\qquad V(C')\cap V(D)\ne\varnothing.
\]
\end{lemma}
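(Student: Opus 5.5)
The statement is Shan's cycle-extension lemma, and I would prove it by the standard Chv\'atal-type insertion argument.

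Fix an orientation of $C$. Let $A=N_G(V(D))\cap V(C)$, and for $a\in A$ let $a^+$ be its successor on $C$. Since $C$ is not Hamiltonian, $|V(C)|\ge3$; otherwise choose $C'$ directly. Suppose, towards a contradiction, that no cycle $C'$ with $V(C)\subsetneq V(C')$ meets $V(D)$.

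\emph{Step 1 (local exclusions).} First, $a^+\notin A$ for every $a\in A$. Otherwise $a$ and $a^+$ have neighbours $u,w\in V(D)$. Replace the edge $aa^+$ by $a$, $u$, a $u$--$w$ path in $D$ (possibly $u=w$), and $w$, $a^+$. This is a longer cycle meeting $D$.

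Second, the set $I=\{a^+:a\in A\}$ is independent. Suppose $a_1^+a_2^+\in E(G)$ with $a_1\ne a_2$. Take $a_1\to u\leadsto w\to a_2$ through $D$. Traverse $C$ backwards from $a_2$ to $a_1^+$. Use the chord $a_1^+a_2^+$. Traverse $C$ forwards from $a_2^+$ back to $a_1$. This is the usual crossing-chord reroute, giving a cycle containing $V(C)\cup\{u,w\}$.

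Third, no vertex of $I$ has a neighbour in $D$, since that would put it in $A$ adjacent to its predecessor, contradicting the first point. Also, no vertex of $I$ has a neighbour in the component $D^*$ of $G-V(C)$ that contains $D$. If some $a^+$ had a neighbour in $D^*$, extend $D$ inside $D^*$ along a path to that neighbour. The enlarged connected subgraph still has at least $|A|$ attachments, and the first point applied to it yields a longer cycle. It is cleanest to run the whole argument with $D$ replaced by $D^*$ from the outset, which only enlarges the attachment set, so assume $D=D^*$.

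\emph{Step 2 (toughness count).} Let $X=V(G)\setminus(I\cup V(D))$. In $G-X$, each vertex of $I$ is isolated: $I$ is independent, and it has no neighbours in $D$. The graph $D$ is a further component. Hence $c(G-X)\ge|I|+1\ge2$, because $A\neq\varnothing$ follows from $|A|>\frac{n}{t+1}-1\ge0$ once $|A|\ge1$ (as $|A|$ is an integer). Then $t$-toughness gives
\[
n-|I|-|V(D)|\ge t(|I|+1),\qquad\text{so}\qquad |I|+1\le\frac{n-|V(D)|+t}{t+1}\le\frac{n}{t+1},
\]
using $|V(D)|\ge1$ and $t\le$ the obvious bound. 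More carefully, $n-1\ge (t+1)|I|+t$ gives $|I|\le\frac{n-1-t}{t+1}=\frac{n}{t+1}-1$. Since $|I|=|A|$, this contradicts $|A|>\frac{n}{t+1}-1$.

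\emph{Main obstacle.} The delicate point is the independence claim in Step 1 together with the requirement that $I$ has no neighbour in $D$. Each of these needs the rerouting to keep all of $V(C)$ and to actually add a vertex of $D$, and the crossing-chord reroute has to be written with the correct traversal directions. The second point is only easy once $D$ has been replaced by its full component $D^*$.
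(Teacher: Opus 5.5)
Your core argument is the standard one the paper relies on. The paper recalls it in the proof of Lemma~\ref{lem:long-completion}: attachment vertices are never consecutive, their successors form an independent set anticomplete to $D$, and toughness is applied to $V(G)\setminus(X^+\cup V(D))$. Your three local exclusions and the final count $(t+1)|I|\le n-|V(D)|-t\le n-1-t$ are correct.

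The step that fails is the normalization ``assume $D=D^*$''. Your contradiction hypothesis only excludes extensions that meet $V(D)$. After the replacement, the attachment set becomes $N_G(V(D^*))\cap V(C)$. The reroutes in Step~1 may then use attachment vertices whose neighbours, and the connecting path, lie entirely in $D^*-V(D)$. The resulting longer cycle need not meet $V(D)$, so it contradicts nothing. As written, you therefore prove only the weaker conclusion $V(C')\cap V(D^*)\ne\varnothing$. That is not what the paper needs: Lemma~\ref{lem:ordered-insertion} applies Lemma~\ref{lem:cycle-extension} with $D=\{z_i\}$, which in general is not a component of $G-V(C)$, and it requires $z_i$ itself to lie on the new cycle.

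The repair is to drop the reduction; contrary to your closing remark, it was never needed. In Step~2 the set $X=V(G)\setminus(I\cup V(D))$ already contains $V(D^*)\setminus V(D)$. So the count needs only that $I$ is independent and anticomplete to $V(D)$, and your first two points give exactly this for the given $D$. Your separate claim that $I$ is anticomplete to $V(D^*)$ is true, provided the reroute starts at a neighbour of $a$ inside $V(D)$, but it is superfluous.

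Two minor points should also be tidied. First, $A\ne\varnothing$ uses $n\ge t+1$; this holds because a non-complete $t$-tough graph has $n\ge 2t+1$, and the complete case is trivial. Second, your first displayed bound on $|I|+1$ is garbled; the correct form is $(t+1)(|I|+1)\le n-|V(D)|+1$. Your ``more carefully'' line is right, however.
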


We shall use the following three auxiliary results of Shan.

The following fact is a direct consequence of Shan's structural lemmas for maximal scattering sets.

\begin{fact}[{ Shan~\cite[Lemmas~2.2(2), 2.5(1), and~2.5(3)]{Shan2026}}]\label{fact:scattering-inheritance}
Let $H$ be a connected $P_4$-free graph with $s(H)\ge0$, let $W$ be a maximal scattering set of $H$, and let $w\in W$ be a minimal element of $W$. Then $W\setminus\{w\}$ is a maximal scattering set of $H-w$. In particular,
\[
s(H-w)=s(H)+1.
\]
\end{fact}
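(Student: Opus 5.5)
The plan is to derive Fact~\ref{fact:scattering-inheritance} directly from the three cited structural statements of Shan, arguing in two steps: first the numerical identity $s(H-w)=s(H)+1$, then the maximality of $W\setminus\{w\}$ as a scattering set of $H-w$.

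\emph{Step 1: the lower bound $s(H-w)\ge s(H)+1$.} Since $w$ is a minimal element of $W$, it lies in a minimal cutset $X\subseteq W$. By Shan's Lemma~2.2(2), $w$ therefore has neighbours in at least two components of $H-W$; this uses only the minimality of $X$ and that $H$ is connected. In $H-w$, put $W'=W\setminus\{w\}$. Then
\[
(H-w)-W'=H-W,
\]
so $c((H-w)-W')=c(H-W)\ge2$. Consequently
\[
c((H-w)-W')-|W'|=s(H)+1,
\]
which gives $s(H-w)\ge s(H)+1$.

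\emph{Step 2: the upper bound.} Take any $U\subseteq V(H-w)$ with $c(H-w-U)\ge2$. Then $U\cup\{w\}$ is a cutset of $H$ with the same components, so
\[
c(H-w-U)-|U|=c(H-(U\cup\{w\}))-|U\cup\{w\}|+1\le s(H)+1.
\]
Hence $s(H-w)=s(H)+1$, and $W'$ attains this value, so $W'$ is a scattering set of $H-w$. Note that this step uses no $P_4$-freeness; the hypothesis $s(H)\ge0$ only guarantees that $W$ is a genuine scattering set with $c(H-W)\ge2$.

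\emph{Step 3: maximality of $W'$.} Suppose $W''\supsetneq W'$ is a scattering set of $H-w$. By the computation in Step~2, $W''\cup\{w\}$ is then a scattering set of $H$ that strictly contains $W$. This contradicts the maximality of $W$.

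I expect the only delicate point to be the dependence on Shan's Lemmas~2.5(1) and~2.5(3). They are presumably needed for the structure of maximal scattering sets in $P_4$-free graphs, for instance the fact that every component of $H-W$ is attached to every vertex of $W$, or that minimal elements exist in $W$ at all. For the statement as given, however, the elementary argument above appears to suffice. I would double-check that the definitions, in particular the convention $W=\varnothing$ when $s\le0$, do not interfere when $s(H)=0$. In that case $W$ must still be nonempty because $w\in W$, so no conflict arises.
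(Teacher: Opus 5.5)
Your argument is correct, and it takes a different route from the paper. The paper gives no proof: it cites Shan's structural Lemmas~2.2(2), 2.5(1) and~2.5(3). You instead verify the fact directly from the definitions.

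The two identities you use are $(H-w)-(W\setminus\{w\})=H-W$ and $(H-w)-U=H-(U\cup\{w\})$ for $U\subseteq V(H-w)$. Together they give a bijection between cutsets of $H-w$ and cutsets of $H$ containing $w$. This bijection preserves the number of components and changes the size by exactly one. That yields $s(H-w)=s(H)+1$, shows that $W\setminus\{w\}$ attains this value, and lets inclusion-maximality pass back to $W$.

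Your route is self-contained and shows that the fact is purely formal. It holds for every non-complete $H$ and every $w\in W$, with no use of $P_4$-freeness, connectedness, the sign of $s(H)$, or the minimality of $w$. The citation route keeps the statement inside Shan's framework, where minimality and $P_4$-freeness do matter for the later insertion step (Lemma~\ref{lem:shan-insertion}); they are just not needed here.

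One small correction concerns Step~1. The appeal to Lemma~2.2(2) is never used, and the justification you give for it is not right as stated. Minimality of a cutset $X\ni w$ only gives $w$ a neighbour in every component of $H-X$, not of $H-W$. Passing to $H-W$ needs the $P_4$-free property that such a $w$ is complete to $H-X$. Delete that sentence; your computation does not depend on it.
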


\begin{lemma}[{Shan~\cite[Lemma~2.11 and its proof]{Shan2026}, one-component case}]\label{lem:shan-insertion}
Let $G$ be a $(P_4\cup P_1)$-free graph, let $S\subseteq V(G)$ be such that $G-S$ is $P_4$-free, and let $D$ be a component of $G-S$. Suppose that $s(D)\ge0$ and $D$ is not a complete bipartite graph. Let $W$ be a maximal scattering set of $D$, and let $z\in W$ be a minimal element of $W$. If $Q$ is a one-component $S$-matched basic path-cover of $D-z$ with $S$-endvertices $x$ and $y$, then $z$ can be inserted into $Q$ to obtain a one-component $S$-matched path-cover of $D$ with the same $S$-endvertices $x$ and $y$.
\end{lemma}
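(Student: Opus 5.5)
The plan is to use the structure of connected cographs together with the scattering facts. I would first reduce to $s(D)=0$, then locate $z$ inside a join decomposition of $D$, and finally find an edge of $Q$ whose two ends are both adjacent to $z$.

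\textbf{Reducing to $s(D)=0$.} By Fact~\ref{fact:scattering-inheritance}, $W\setminus\{z\}$ is a maximal scattering set of $D-z$ and $s(D-z)=s(D)+1$. A basic path-cover of $D-z$ has exactly $\max\{1,s(D-z)\}$ components. Since $Q$ has one component, $s(D-z)\le 1$, so $s(D)=0$. Hence $c(D-W)=|W|$. The single component $R_1=Q-S$ then spans $W\setminus\{z\}$ together with all $|W|$ components of $D-W$. So $Q$ is an $x$--$y$ path whose interior is exactly $V(D)\setminus\{z\}$.

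\textbf{Locating $z$ in a join.} Since $D$ is a connected $P_4$-free graph, its complement is disconnected, and I write $D=A\vee B$ as a join of two nonempty vertex sets. Because $z$ is a minimal element of $W$, there is a minimal cutset $W'\subseteq W$ of $D$ with $z\in W'$. A minimal separator of a join must contain all of one side, so say $A\subseteq W'$. Then $z\in A$ (or $z$ lies in a minimal separator inside $D[B]$, and we recurse into that side). In particular $z$ is adjacent to every vertex of $B$.

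\textbf{Inserting $z$.} It suffices to find an edge $uv$ of $Q$ with $u,v\in N_G(z)$. Replacing $uv$ by $uzv$ then gives a one-component $S$-matched path-cover of $D$ with the same endvertices $x$ and $y$.
\begin{itemize}
\item If two consecutive vertices of $Q$ lie in $B$, we are done.
\item If $Q$ contains an edge $ab$ with $a\in A\setminus\{z\}$ and $b\in B$, and $a\in N_G(z)$, we are also done.
\item Otherwise, consecutive $D$-vertices of $Q$ alternate between $A$ and $B$, and $z$ has no neighbour in $A$ adjacent on $Q$ to a $B$-vertex.
\end{itemize}
In this remaining case I would use three further ingredients. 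First, $s(D)=0$ gives $|B|\le|A|$-type counting constraints, using $A\subseteq W$ and the fact that $B\setminus W$ splits into $|W|$ components. Second, the hypothesis that $D$ is not complete bipartite forces some edge inside $A$ or inside $B$ that the alternating path can be rerouted to use. Third, I would handle the ends of the path. Take the first $D$-vertex $q_1$ after $x$ on $Q$. If $z$ is nonadjacent to $x$ but $zq_1$ is an edge, consider $x$, $q_1$, and vertices of $A$ and $B$. Since $G$ is $(P_4\cup P_1)$-free and $G-S$ is $P_4$-free, this should give either $xz\in E(G)$ with $q_1\in N(z)$, which allows insertion at the end edge $xq_1$, or an induced $P_4\cup P_1$, which is a contradiction.

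\textbf{Main obstacle.} I expect the alternating case to be the main obstacle: every $B$-vertex on $Q$ is flanked by $A$-vertices that are not neighbours of $z$. There I would try first to exhibit an induced $P_4\cup P_1$ built from $z$, two consecutive $A$--$B$--$A$ vertices of $Q$, and an endvertex in $S$. Failing that, I would re-choose the basic path-cover locally by a two-edge swap, which is allowed because only the endvertices $x$ and $y$ need to be preserved.
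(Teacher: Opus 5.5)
The paper gives no proof of this lemma; it quotes it from Shan's Lemma~2.11. Your argument therefore has to stand on its own, and it has a genuine gap. The case you flag as the main obstacle is where all the content lies, and the tools you propose for it cannot work.

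Take $G=(K_1\cup K_4)\vee\overline{K_3}$ with $K_1=\{z\}$, $K_4=\{w_1,w_2,x,y\}$, $\overline{K_3}=\{d_1,d_2,d_3\}$ and $S=\{x,y\}$. Then:
\begin{itemize}
\item $G$ is a cograph.
\item $D=G-S$ has $s(D)=0$, and its unique maximal scattering set $W=\{z,w_1,w_2\}$ is itself a minimal cutset.
\item $D$ contains the triangle $w_1w_2d_1$, so it is not complete bipartite.
\item $Q=xd_1w_1d_2w_2d_3y$ is a one-component $S$-matched basic path-cover of $D-z$.
\end{itemize}
Yet no edge of $Q$ has both ends in $N_G(z)=\{d_1,d_2,d_3\}$, $z$ is adjacent to neither $x$ nor $y$, and $G$ has no induced $P_4\cup P_1$ at all. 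So your endvertex dichotomy (``either $xz\in E(G)$ or an induced $P_4\cup P_1$'') is false. No argument of the form ``find an edge of $Q$ inside $N(z)$, or else a forbidden subgraph'' can succeed here: $Q$ must be rerouted, in this example to $xd_1w_1w_2d_2zd_3y$.

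Two further steps are also wrong:
\begin{itemize}
\item Your reduction to $s(D)=0$ conflates $Q$ with $Q-S$. The basic path-cover is $Q-S$, which has $s(D-z)=s(D)+1$ components, so a one-component $Q$ may contain $s(D)$ internal $S$-vertices. The paper excludes these in its own application only because there $s(H-w)=1$.
\item ``Consecutive $D$-vertices alternate between $A$ and $B$'' does not follow from your case split, which never excludes consecutive pairs inside $A$.
\end{itemize}

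What is missing is the right consequence of minimality and the explicit shape of $Q$. Your fallback of a local swap is the right instinct, but it can only be specified once you have both.
\begin{itemize}
\item Your join step uses nothing about $z$, since every vertex of a join is complete to the other side. What minimality actually gives is this: $z$ lies in a minimal cutset $W'\subseteq W$, so it has a neighbour in every component of $D-W'$. The induced-$P_4$ argument from the proof of Lemma~\ref{lem:cograph-cover} then makes $z$ complete to $D-W'\supseteq D-W$.
\item By Fact~\ref{fact:scattering-inheritance}, with the basic cover taken with respect to $W\setminus\{z\}$, the component $R_1$ of $Q-S$ consists of the $|W|-1$ vertices of $W\setminus\{z\}$ and $|W|$ whole components of $D-W$. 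Each other component of $Q-S$ spans one component of $D-W$.
\end{itemize}
With these, the proof closes as follows. Put $m=|W|$.
\begin{enumerate}
\item If some component of $D-W$ is nontrivial, two consecutive vertices of $Q$ lie in $D-W$. This follows by pigeonhole in $R_1$, or from an edge of another component of $Q-S$. Insert $z$ between them.
\item Otherwise $D-W$ is independent, and $R_1=d_1w_1d_2\cdots w_{m-1}d_m$ alternates strictly.
\begin{itemize}
\item If $zw_i\in E(G)$, put $z$ between $d_i$ and $w_i$.
\item If $w_iw_j\in E(G)$ with $i<j$, let $P$ be the subpath of $R_1$ from $d_{i+1}$ to $d_j$. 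Replace $d_iw_iPw_jd_{j+1}$ by $d_iw_iw_jP^{-1}zd_{j+1}$; the ends of $R_1$ are unchanged.
\item Otherwise $W$ is independent, so $D$ is a connected bipartite cograph. A join with an edge inside one side contains a triangle, so $D$ is complete bipartite, which is exactly the excluded case.
\end{itemize}
\end{enumerate}
Neither the $S$-endvertices nor the $(P_4\cup P_1)$-freeness of $G$ is needed.
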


The first two assertions of the next lemma are due to
Shan~\cite[Lemma~2.12]{Shan2026}. The final endpoint assertion is
implicit in the construction used in Shan's proof. We state it explicitly
because it will be used in Lemma~2.6, and we include the details for
completeness.

\begin{lemma}[{Shan~\cite[Lemma~2.12]{Shan2026}, with an endpoint refinement}]\label{lem:shan-cover}
Let $G$ be a $(P_4\cup P_1)$-free graph and let $S\subseteq V(G)$ be such that $G-S$ is $P_4$-free. Then the following statements hold.
\begin{enumerate}[label=\textup{(\arabic*)}]
\item Suppose that $G$ is 4-tough with respect to $S$. If $s(G-S)\ge1$, then $G-S$ has an $S$-matched basic path-cover with exactly $s(G-S)$ components.
\item Suppose that $G$ is 4.5-tough with respect to $S$. If $s(G-S)\le0$, then $G-S$ has an $S$-matched path-cover with a single component.
\end{enumerate}
Moreover, in part~\textup{(2)}, if $|S|\ge2$ and $G-S$ is nonempty, the path-cover may be chosen to consist of an $(x,y)$-path $Q$ for two distinct vertices $x,y\in S$ such that
\[
V(Q)\cap S=\{x,y\},\qquad V(Q)\setminus S=V(G-S).
\]
Thus the only $S$-vertices on $Q$ are its two endvertices.
\end{lemma}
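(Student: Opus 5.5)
Parts (1) and (2) are exactly Shan's Lemma~2.12, so we take them as given. The only new content is the endpoint refinement in part~(2): if $|S|\ge2$ and $G-S$ is nonempty, the single component can be taken to be an $(x,y)$-path $Q$ with $x\ne y$, $V(Q)\cap S=\{x,y\}$ and $V(Q)\setminus S=V(G-S)$. We would obtain this by reading Shan's proof of part~(2) and checking what its output actually looks like, normalizing where needed.

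\textbf{Reduction to one component.} Shan's proof handles $s(G-S)\le0$ by a short case analysis. Either $G-S$ is connected, or it has few components whose scattering numbers combine to a nonpositive total.

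\textbf{Connected case.} For a connected component $D$ with $s(D)\ge0$ that is not complete bipartite, we proceed as follows.
\begin{enumerate}[label=\textup{(\roman*)}]
\item Delete a minimal element $z$ of a maximal scattering set $W$. By Fact~\ref{fact:scattering-inheritance}, $s(D-z)=s(D)+1\ge1$.
\item Apply part~(1) to $D-z$; note that removing one vertex preserves toughness with respect to $S$ up to the slack between $4$ and $4.5$. If $s(D-z)=1$, this yields a one-component $S$-matched basic path-cover.
\item Reinsert $z$ using Lemma~\ref{lem:shan-insertion}. This keeps the same $S$-endvertices $x,y$.
\end{enumerate}
The remaining subcases ($D$ complete, complete bipartite, or $s(D)<0$ with $D$ highly connected) are handled directly. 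We would use toughness $4.5$ to find two distinct vertices $x,y\in S$ with suitable neighbours in $D$ (via $\delta(G)\ge 9$ and $(P_4\cup P_1)$-freeness), and then a Hamiltonian path of $D$ between those neighbours. For such a path we use that $1$-tough cographs are Hamiltonian (Jung) or that complete bipartite graphs with balanced sides have the required Hamiltonian paths. Concatenating gives the $(x,y)$-path.

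\textbf{Normalizing the endpoints.} An $S$-matched path-cover might a priori contain interior $S$-vertices, or consist of a single $S$-vertex. To rule this out:
\begin{itemize}
\item Whenever an interior vertex $s\in S$ appears on $Q$ between consecutive segments, we splice it out. We do this either by shortcutting or by rerouting to a fresh endpoint in $S$, using that $G-S$ vertices adjacent to it have many $S$-neighbours by toughness.
\item We keep only the two extremal $S$-vertices. Since $G-S$ is nonempty, $Q$ contains a vertex outside $S$, so its two endvertices are distinct elements of $S$.
\item Since $V(Q)\subseteq V(G-S)\cup S$ and $Q$ covers $G-S$, the equality $V(Q)\setminus S=V(G-S)$ then follows.
\end{itemize}

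\textbf{Main obstacle.} The hardest step is the splice-out step in the disconnected subcase. Joining two components of $G-S$ seems to require an $S$-vertex in the interior of the path. So there the claim must come from Shan's construction itself, which ensures that components are joined through the choice of scattering set rather than through $S$. We would first try to show that when $s(G-S)\le0$, the graph $G-S$ is in fact connected, using that a scattering number $\le0$ forces $c(G-S)\le1$ (take $U=\varnothing$). This reduces everything to the connected case.
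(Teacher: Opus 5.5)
\textbf{Verdict: the case structure is right, but the steps that actually carry the refinement are either invalid or unjustified.}

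Your case structure matches the paper's proof:
\begin{itemize}
\item $s(G-S)\le0$ forces $H=G-S$ to be connected (take $U=\varnothing$), so the disconnected subcase and your ``main obstacle'' are vacuous.
\item $s(H)<0$ is handled directly.
\item $s(H)=0$ with $H$ not complete bipartite is handled by deleting a minimal element $z$, applying part~(1) to $H-z$ inside $G-z$, and reinserting $z$. Here $|L|+1\ge4.5c$ and $c\ge2$ give $|L|\ge4c$, so your toughness remark is fine.
\item The complete bipartite case is handled directly.
\end{itemize}

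\textbf{Interior $S$-vertices.} Your normalization step would fail. An interior $S$-vertex with path-neighbours $p,q$ can be shortcut only if $pq\in E(G)$. Cutting at it and ``rerouting to a fresh endpoint'' leaves two paths, not one. Nor can you simply discard the non-extremal $S$-vertices. The missing observation is that no normalization is needed. The one-component $S$-matched basic path-cover $Q$ of $H-z$ has $Q-S$ a basic path-cover with $\max\{1,s(H-z)\}=1$ component. Since no two $S$-vertices are consecutive on $Q$, an interior $S$-vertex would split $Q-S$ into two nonempty pieces. Inserting $z$ adds no $S$-vertex, and in the other cases the path $xuPvy$ has no interior $S$-vertex by construction.

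\textbf{Hamiltonicity is too weak.} In the direct cases you need a Hamiltonian $(u,v)$-path of $H$ between prescribed attachment vertices. Hamiltonicity of $1$-tough cographs does not give one. You need Jung's stronger statement that a cograph with $s(H)\le-1$ is Hamiltonian-connected.

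\textbf{Choice of $x,y,u,v$.} Toughness holds only with respect to $S$, so $\delta(G)\ge9$ is not a consequence. In any case a degree bound does not produce what is needed: independent edges $xu,yv$ with $x\ne y$ in $S$ and $u\ne v$ in $V(H)$, or, if $V(H)=\{u\}$, two distinct $S$-neighbours of $u$. These come from admissible cutsets:
\begin{itemize}
\item If $V(H)=\{u\}$, then $N_G(u)\cap S$, of order at most one, would otherwise be admissible. This is where $|S|\ge2$ enters.
\item If $|V(H)|\ge2$, there is at least one $S$--$H$ edge, since otherwise $\varnothing$ is admissible. If no independent pair existed, a single vertex would cover all $S$--$H$ edges and be an admissible cutset of order one.
\end{itemize}

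\textbf{Complete bipartite case.} For $H$ complete bipartite with sides $A,B$ you must also show $|A|=|B|$, which follows from $0=s(H)\ge|B|-|A|$. You must then take $u\in A$ and $v\in B$, since a Hamiltonian path of $K_{m,m}$ has its ends in different sides. This requires two further facts:
\begin{itemize}
\item Each side has an $S$-neighbour; otherwise the other side is a cutset of order $|A|$ leaving at least $|A|+1$ components.
\item $N_G(A)\cap S=N_G(B)\cap S=\{s_0\}$ is impossible, because then $\{s_0\}$ would be an admissible cutset.
\end{itemize}
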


\begin{proof}[Proof of the endpoint refinement in Lemma~\ref{lem:shan-cover}]
It remains only to prove the additional endpoint assertion, which is the precise form needed in Lemma~\ref{lem:compression}. Put $H=G-S$. Since $s(H)\le0$, the graph $H$ is connected: otherwise the empty set would give $s(H)\ge c(H)\ge2$.

If $|V(H)|=1$, let $u$ be its unique vertex.  If $u$ had at most one neighbor in $S$, then deleting $N_G(u)\cap S$ would leave $u$ as one component and at least one vertex of $S$ in another component, contradicting the 4.5-toughness of $G$ with respect to $S$.  Hence $u$ has two distinct neighbors $x,y\in S$, and $xuy$ has the required form.

Assume now that $|V(H)|\ge2$.  There exist two independent edges $xu$ and $yv$ between $S$ and $V(H)$, with $x\ne y$ and $u\ne v$.  To see this, first observe that there is at least one edge between $S$ and $V(H)$; otherwise the empty set would be an admissible cutset. Fix such an edge $xu$. If there were no matching of size two, every edge between $S$ and $V(H)$ would meet $xu$. Moreover, there could not be both an edge $xu'$ with $u'\ne u$ and an edge $x'u$ with $x'\ne x$, because those two edges would be independent. Consequently, either $x$ or $u$ is incident with every edge between $S$ and $V(H)$. If $x$ is the common endpoint, then $H$ and the nonempty set $S\setminus\{x\}$ lie in different components of $G-x$; moreover, the unique component $H$ of $G-S$ is untouched, so $\{x\}$ is admissible with respect to $S$. If $u$ is the common endpoint, then $H-u\ne\varnothing$ and $H-u$ is anticomplete to $S$ in $G-u$; hence $\{u\}$ is again an admissible cutset with respect to $S$. Both alternatives contradict 4.5-toughness.

If $s(H)<0$, Jung's characterization~\cite{Jung1978} says that the $P_4$-free graph $H$ is Hamiltonian-connected.  Hence $H$ has a spanning $(u,v)$-path $P$, and $xuPvy$ is the desired path-cover.

Suppose next that $s(H)=0$ and $H$ is not complete bipartite.  Let $W$ be a maximal scattering set of $H$, and choose a minimal element $w\in W$.  By Fact~\ref{fact:scattering-inheritance}, $s(H-w)=1$.  Set $G'=G-w$.  We claim that $G'$ is 4-tough with respect to $S$.

Let $L$ be an admissible cutset of $G'$ with respect to $S$, and put $c=c(G'-L)\ge2$.  Write $H_1,\ldots,H_r$ for the components of $G'-S=H-w$.  Admissibility means that $V(H_i)\setminus L\ne\varnothing$ for every $i$.  In particular,
\[
H-(L\cup\{w\})=(H-w)-L\ne\varnothing.
\]
Since $H$ is the only component of $G-S$, this shows that $L\cup\{w\}$ is admissible for $G$ with respect to $S$.  Moreover,
\[
G-(L\cup\{w\})=G'-L,
\]
and hence
\[
c\bigl(G-(L\cup\{w\})\bigr)=c(G'-L)=c.
\]
The 4.5-toughness of $G$ with respect to $S$ now gives
\[
|L|+1\ge4.5c.
\]
Since $c\ge2$,
\[
|L|\ge4.5c-1\ge4c.
\]
Thus $G'$ is 4-tough with respect to $S$.  Part~\textup{(1)} gives an $S$-matched basic path-cover of $H-w$ with one component.  Its deletion of $S$ is a basic path-cover with one component, so an internal $S$-vertex would split it into at least two components.  The path therefore has no internal $S$-vertex.  Lemma~\ref{lem:shan-insertion} inserts $w$ while preserving the two $S$-endvertices and hence gives the required path through all vertices of $H$.

Finally, suppose that $s(H)=0$ and $H$ is complete bipartite, with bipartition $(A,B)$ and $|A|\le|B|$. Here $|B|\ge2$, since $K_{1,1}$ is complete and has scattering number $-\infty$ under our convention. Deleting $A$ leaves $|B|$ isolated vertices, so
\[
0=s(H)\ge |B|-|A|.
\]

Consequently $|A|=|B|$. We claim that each of $A$ and $B$ has a neighbor in $S$. If $A$ were anticomplete to $S$, then $B$ would be an admissible cutset: the graph $H-B$ consists of the $|A|$ isolated vertices of $A$, while the nonempty set $S$ lies in at least one further component. Thus
\[
c(G-B)\ge |A|+1,
\]
whereas $|B|=|A|<4.5(|A|+1)$, a contradiction. The same argument with $A$ and $B$ interchanged proves the claim.
 Put
\[
S_A=N_G(A)\cap S,\qquad S_B=N_G(B)\cap S.
\]
Both sets are nonempty. If no distinct pair $x\in S_A$ and $y\in S_B$ existed, then necessarily $S_A=S_B=\{s_0\}$ for some $s_0\in S$. Thus every edge between $S$ and $H$ would be incident with $s_0$, and deleting $s_0$ would be an admissible cutset of order one, a contradiction. Hence there are distinct vertices $x\in S_A$ and $y\in S_B$; choose $u\in A$ and $v\in B$ with $xu,yv\in E(G)$. The balanced complete bipartite graph $H$ has a spanning $(u,v)$-path $P$, so $xuPvy$ is the desired path-cover. This completes the proof of the endpoint refinement in Lemma~\ref{lem:shan-cover}.
\end{proof}

The following lemma records, in the form needed below, the
path-cover compression used in the proof of
Shan~\cite[Theorem~1.2, Case~2]{Shan2026}. The underlying
path-cover is supplied by
Shan~\cite[Lemma~2.12]{Shan2026}. Since the linear-forest
formulation, the bound $m\le n/8$, and the simultaneous expansion
statement are not stated there as a single result, we give a
complete proof.
\begin{lemma}\label{lem:compression}
Let $G$ be a non-complete 7-tough $(P_4\cup P_1)$-free graph on $n$ vertices. Let $S\subseteq V(G)$ satisfy $|S|\ge2$, and suppose that $G-S$ is nonempty and $P_4$-free. Put
\[
m=\max\{1,s(G-S)\}.
\]
Then $m\le n/8$. Moreover, there are
\begin{itemize}
\item a linear forest $F$ on a subset of $S$, with exactly $m$ edges and no isolated vertices, and
\item for every edge $e=xy\in E(F)$, an $(x,y)$-path $Q_e$ in $G$,
\end{itemize}
such that the interiors of the paths $Q_e$ are pairwise disjoint, lie in $G-S$, and partition $V(G-S)$. If $R=G[S]$ and $H=R+E(F)$, with the edges of $F$ regarded as marked edges, then every cycle $C_H$ of $H$ containing all marked edges expands to a cycle $C$ of $G$ satisfying
\[
|V(C)|=|V(C_H)|+|V(G-S)|.
\]
\end{lemma}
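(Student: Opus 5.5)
We bound $m$ first, then use Lemma~\ref{lem:shan-cover} to obtain a path-cover and compress each path to an edge.

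\emph{The bound $m\le n/8$.} If $m=1$ this is immediate, since $n\ge 2\cdot 7+1=15$. Otherwise $s(G-S)\ge 2$. Choose $U\subseteq V(G-S)$ with $c(G-S-U)-|U|=s(G-S)$ and $c(G-S-U)\ge2$. Then $X=S\cup U$ is a cutset of $G$, because distinct components of $G-S-U$ stay separated once $S$ is deleted. Toughness gives
\[
|S|+|U|\ge 7\,c(G-X)\ge 7\,c(G-S-U)=7(s+|U|),
\]
where $s=s(G-S)$. Hence $|S|\ge 7s+6|U|$. Moreover $G-S-U$ has at least $s+|U|\ge s$ vertices, so
\[
n\ge |S|+|U|+(s+|U|)\ge 8s .
\]
This gives $s\le n/8$.

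\emph{Existence of $F$ and the paths $Q_e$.} Since $G$ is 7-tough, it is 7-tough with respect to $S$, and hence 4-tough and 4.5-tough with respect to $S$. If $s(G-S)\ge1$, Lemma~\ref{lem:shan-cover}(1) gives an $S$-matched basic path-cover $Q$ with exactly $s(G-S)=m$ components. If $s(G-S)\le0$, the endpoint refinement of Lemma~\ref{lem:shan-cover}(2) gives a single $(x,y)$-path, with $x\ne y$ in $S$, whose only $S$-vertices are its endvertices.

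In the first case a component of $Q$ may contain internal $S$-vertices. In a basic cover, however, $Q-S$ has exactly $m$ components, which are the components of the basic path-cover. No two $S$-vertices are adjacent in $Q$, and each component of $Q$ has both ends in $S$. So each $S$-vertex on $Q$ separates consecutive segments lying in $G-S$. Counting segments shows that the total number of maximal subpaths whose ends lie in $S$ and whose interiors lie in $G-S$ is exactly the number of components of $Q-S$, namely $m$. I take these subpaths as the paths $Q_e$, with $e=xy$ joining their ends. Their interiors partition $V(G-S)$. The edges $e$ form a linear forest $F$ on $V(Q)\cap S$: each component of $Q$ becomes a path through its $S$-vertices in order. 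Two edges cannot coincide as a multi-edge, since distinct $S$-vertices on one path are distinct and each component is a path. $F$ has no isolated vertices.

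The main obstacle is precisely this segment counting. It needs that an internal $S$-vertex is never adjacent to another $S$-vertex, and that no segment is empty; both follow from the definition of an $S$-matched basic path-cover. It also needs that $F$ really has $m$ edges. I would check this carefully against Shan's definition that $Q-S$ is a basic path-cover with $m$ components.

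\emph{Expansion.} Let $C_H$ be a cycle of $H=R+E(F)$ through all marked edges. Replace each marked edge $xy$ by $Q_{xy}$. The interiors of the $Q_e$ are disjoint from $S\supseteq V(C_H)$ and from each other. The edges of $R$ are edges of $G$, so the result is a cycle $C$ of $G$. It gains exactly $\sum_e |{\rm int}(Q_e)|=|V(G-S)|$ vertices, which gives $|V(C)|=|V(C_H)|+|V(G-S)|$.
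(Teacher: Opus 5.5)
Your proof is correct and follows essentially the same route as the paper. You use the same scattering-set count $|S|+|U|\ge 7(s+|U|)$ for $m\le n/8$, Lemma~\ref{lem:shan-cover}(1) and (2) with the endpoint refinement, suppression of the segments of $Q-S$ between consecutive $S$-vertices to marked edges (this count is immediate from the definition, since a basic path-cover has exactly $\max\{1,s(G-S)\}=m$ components, as you note), and expansion of a cycle through all marked edges by re-subdividing them.
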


\begin{proof}
Global 7-toughness implies 7-toughness, and hence both 4-toughness and 4.5-toughness, with respect to $S$.

Suppose first that $s(G-S)=m\ge1$. By Lemma~\ref{lem:shan-cover}(1), $G-S$ has an $S$-matched basic path-cover $Q$ with $c(Q)=m$. By the definition of a basic path-cover, $Q-S$ also has exactly $m$ nonempty path components; denote them by
\[
P_1,\ldots,P_m.
\]
Their vertex sets are pairwise disjoint and partition $V(G-S)$. Every component of $Q$ is a path with both endvertices in $S$, and no two $S$-vertices are adjacent in $Q$. Consequently, each maximal subpath $P_i$ of $Q-S$ is flanked in its component of $Q$ by two distinct vertices $x_i,y_i\in S$. Put
\[
e_i=x_i y_i,\qquad Q_{e_i}=x_iP_i y_i.
\]
Suppress every $P_i$ to the edge $e_i$. On each component of $Q$, this operation produces a path on its $S$-vertices. After isolated $S$-vertices are discarded, the union of these paths is a linear forest $F$ with edge set $\{e_1,\ldots,e_m\}$. Distinct $P_i$ give distinct edges: two such subpaths in one component of $Q$ cannot have the same two flanking vertices without repeating an $S$-vertex on a simple path, while distinct components of $Q$ are vertex-disjoint.

We next assume that $s(G-S)\le0$, so that $m=1$.  By Lemma~\ref{lem:shan-cover}(2) and its final endpoint assertion, there are distinct vertices $x,y\in S$ and an $(x,y)$-path $Q_{xy}$ such that
\[
V(Q_{xy})\cap S=\{x,y\},\qquad V(Q_{xy})\setminus S=V(G-S).
\]
Let $F$ consist of the single marked edge $xy$ and assign to it the path $Q_{xy}$.

We now prove the bound on $m$. If $m=1$, then $n\ge15$, because every non-complete 7-tough graph has at least $15$ vertices. Hence $m\le n/8$. Suppose that $m=s(G-S)\ge1$, and choose $U\subseteq V(G-S)$ such that
\[
c(G-S-U)-|U|=m.
\]
Then $c(G-S-U)=m+|U|\ge2$, so $S\cup U$ is a cutset of $G$. Toughness gives
\[
|S|+|U|\ge7(m+|U|).
\]
Moreover, $G-S-U$ has $m+|U|$ nonempty components, and therefore
\[
|V(G-S)|\ge |U|+c(G-S-U)=m+2|U|.
\]
Consequently,
\[
n+|U|=|S|+|U|+|V(G-S)|\ge8m+9|U|,
\]
and in particular $n\ge8m$.

It remains to justify the expansion assertion. The edges of $F$ remain distinguished as marked edges even if some already belong to $R=G[S]$. Let a component of $F$ have its vertices in path order
\[
x_0,x_1,\ldots,x_k,
\]
and write $e_i=x_{i-1}x_i$. In a cycle containing every marked edge, the path $x_0x_1\cdots x_k$ occurs as a contiguous segment: at each internal vertex $x_i$, the two incident marked edges already occupy the two cycle edges. Orient each assigned path $Q_{e_i}$ from $x_{i-1}$ to $x_i$ and replace this marked segment by
\[
Q_{e_1}Q_{e_2}\cdots Q_{e_k},
\]
writing each common endvertex only once. Different components of $F$ have disjoint vertex sets, and the interiors of all assigned paths are pairwise disjoint, lie in $G-S$, and together partition $V(G-S)$. Hence the replacements can be made simultaneously and produce a simple cycle $C$ of $G$. The added vertices are precisely the vertices of $G-S$, so
\[
|V(C)|=|V(C_H)|+|V(G-S)|.
\]
\end{proof}
We shall use Shan's covering-cycle lemma for minimal cutsets.

\begin{lemma}[{Shan~\cite[Lemma~2.15(2)]{Shan2026}}]\label{lem:shan-cycle}
Let $G$ be a 4.5-tough $(P_4\cup P_1)$-free graph and let $X$ be a minimal cutset of $G$. Then $G$ has a cycle containing every vertex of $G-X$.
\end{lemma}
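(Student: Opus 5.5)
The plan is to reduce to the path-cover machinery of Lemma~\ref{lem:shan-cover}, taking $S=X$, and then to close the resulting paths into a single cycle using the vertices of $X$.

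\emph{Step 1 (the structure of $G-X$).} Let $D_1,\dots,D_k$ be the components of $G-X$, where $k\ge2$. Since $X$ is a minimal cutset, every $x\in X$ has a neighbour in every $D_i$: otherwise $X\setminus\{x\}$ would still separate. Next, $G-X$ is $P_4$-free. Indeed, if $D_j$ contained an induced $P_4$, then any vertex of $D_i$ with $i\ne j$ is anticomplete to it, giving an induced $P_4\cup P_1$. Finally, global $4.5$-toughness implies $4.5$-toughness with respect to $X$. Since $G-X$ is disconnected, the empty set gives $s(G-X)\ge k\ge2$.

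\emph{Step 2 (covering $G-X$ by matched paths).} Lemma~\ref{lem:shan-cover}(1) applies and yields an $X$-matched basic path-cover $Q$ of $G-X$ with $m=s(G-X)$ components. As in the proof of Lemma~\ref{lem:compression}, I would compress each maximal $(G-X)$-segment of $Q$ to a marked edge. This gives a linear forest $F$ with $m$ edges on a subset of $X$, and the auxiliary graph $H=G[X]+E(F)$. Any cycle of $H$ through all marked edges expands, by the argument of Lemma~\ref{lem:compression}, to a cycle of $G$ containing every vertex of $G-X$. The lemma therefore reduces to finding a cycle of $H$ through all edges of $F$, where unmarked vertices of $X$ may be omitted.

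\emph{Step 3 (closing the paths; the main obstacle).} The difficulty is linking the endvertices of distinct components of $F$, because every vertex of $G-X$ is already used. The only available links are edges and paths inside $G[X]$.

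I would first try to show that $G[X]$ is highly connected relative to the number of marked components. Take an independent set $I\subseteq X$ of order at least $3$. Since every $x\in X$ sees every $D_i$, choose $u\in D_1$ and $v\in D_2$ adjacent to vertices of $I$. If two vertices of $I$ had disjoint neighbourhoods in $D_1$ and $D_2$ in a suitable pattern, one could build an induced $P_4$ through $D_1,I,D_2$ together with a vertex isolated from it, contradicting $(P_4\cup P_1)$-freeness. This should force $\alpha(G[X])$ to be small, or make $X$ dense around the endvertices of $F$.

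Combining this with the toughness counts $|X|\ge4.5\,c(G-X)$ and $m\le |V(G)|/(4.5+1)$, in the spirit of~\eqref{eq:alpha}, I would then choose a marked path system in which consecutive endvertices are joined through unused vertices of $X$. The model for this is the proof of Lemma~\ref{lem:CE}. Concretely, take a longest cycle (or longest path system) in $H$ containing all marked edges. If some marked component is missed, use the fact that its endvertices and their neighbours in $X$ form an independent set that is too large for the toughness bound. This is the step where I expect the real work, and where the $(P_4\cup P_1)$-free structure of $X$ relative to the $D_i$ must be exploited carefully.
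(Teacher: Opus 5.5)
\textbf{There is a genuine gap.} The paper quotes this lemma from Shan without proof, so your proposal has to stand on its own. As written it does not. Steps~1 and~2 are correct as far as they go, but Step~3 is only a list of intentions. More seriously, Step~2 reduces the lemma to a statement that can be false.

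\textbf{Why the reduction in Step~2 fails.} Once you fix $Q$ and compress it to the linear forest $F$, any cycle of $H=G[X]+E(F)$ must use edges of $G[X]$, because $F$ is acyclic. But $G[X]$ may have no edges at all. Take $G=\overline{K_p}\vee(K_q\cup K_q)$ with $p\ge9$ and $2q\ge4.5p$, and let $X$ be the independent side.
\begin{itemize}
\item Every cutset of this join contains one of the two sides.
\item A cutset containing both cliques and $j$ vertices of $X$ has ratio $(2q+j)/(p-j)\ge 2q/p\ge4.5$.
\item A cutset containing $X$ leaves at most two components and has order at least $p\ge9=4.5\cdot2$.
\end{itemize}
So $G$ is $4.5$-tough. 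It is a cograph, hence $(P_4\cup P_1)$-free. Also $X$ is a minimal cutset, since each of its vertices is complete to both cliques.

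Here $G[X]$ is edgeless, so $H$ is a forest for every choice of $Q$ and has no cycle. Yet $x_1P_1x_2P_2x_1$ is a cycle of $G$ through $G-X$, where $P_i$ is a Hamiltonian path of the $i$th clique. The same example refutes the hoped-for smallness of $\alpha(G[X])$: here $\alpha(G[X])=|X|=p$, which is unbounded. So no Chv\'atal--Erd\H{o}s-type or longest-cycle argument inside $G[X]$ can close the gap.

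\textbf{What is missing.} The compressed structure must be allowed to close up. A single vertex of $X$ has to be able to serve as the junction between the last segment and the first, as $x_1$ does above. A linear forest whose completion must come from $G[X]$ forbids exactly this.

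A natural route is to build the cycle directly as a closed chain $x_1R_1x_2R_2\cdots x_rR_rx_1$. Here $R_1,\dots,R_r$ are paths covering the components of $G-X$, and the $x_i\in X$ are distinct, with $x_i$ adjacent to the last vertex of $R_{i-1}$ and the first vertex of $R_i$. The path covers of the components and the junctions must be chosen together; fixing $Q$ first throws away this freedom.

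The structure to exploit is the adjacency between $X$ and the components of $G-X$, not $G[X]$ itself:
\begin{itemize}
\item By minimality, every $x\in X$ sees every component.
\item When $c(G-X)\ge3$, $(P_4\cup P_1)$-freeness forces each $x$ to be complete to all components but at most one. This is the argument at the start of the proof of Lemma~\ref{lem:asymmetric}.
\item The two-component case needs a finer analysis of how the neighborhoods of $X$-vertices in the two components interact.
\item Toughness supplies enough junctions: if $U$ attains $s(G-X)$, then applying toughness to the cutset $X\cup U$ gives $|X|\ge4.5\,s(G-X)$.
\end{itemize}
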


The following ordered insertion lemma follows from Lemmas~\ref{lem:cycle-extension} and~\ref{lem:shan-cycle}.

\begin{lemma}[{parameterized form of
		Shan~\cite[Lemma~2.17]{Shan2026}}]\label{lem:ordered-insertion}
Let $t\ge4.5$, let $G$ be a $t$-tough $(P_4\cup P_1)$-free graph on $n$ vertices, and let $X$ be a cutset. Suppose that $X_0=\{z_1,\ldots,z_q\}\subseteq X$ is ordered so that
\[
d_G\bigl(z_i,(V(G)\setminus X)\cup\{z_1,\ldots,z_{i-1}\}\bigr)>\frac{n}{t+1}-1
\]
for every $i$. Then $G$ has a cycle containing every vertex of $(V(G)\setminus X)\cup X_0$.
\end{lemma}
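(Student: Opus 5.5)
We prove Lemma~\ref{lem:ordered-insertion} by first producing a cycle through all of $V(G)\setminus X$, and then absorbing $z_1,\ldots,z_q$ one at a time, in the given order, with Lemma~\ref{lem:cycle-extension}.

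For the initial cycle, choose a minimal cutset $X'\subseteq X$. Such a set exists because $X$ is a cutset and $G$ is finite: keep deleting vertices from $X$ while the result is still a cutset. Since $t\ge4.5$, the graph $G$ is 4.5-tough, so Lemma~\ref{lem:shan-cycle} gives a cycle $C_0$ containing every vertex of $G-X'$. Because $V(G)\setminus X\subseteq V(G)\setminus X'$, the cycle $C_0$ contains all of $V(G)\setminus X$.

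The induction is on $i\in[0,q]$, with the following hypothesis: there is a cycle $C_i$ whose vertex set contains $(V(G)\setminus X)\cup\{z_1,\ldots,z_i\}$. The case $i=0$ is $C_0$. For the step, suppose $C_{i-1}$ is given.
\begin{itemize}
\item If $z_i\in V(C_{i-1})$, set $C_i=C_{i-1}$.
\item If $C_{i-1}$ is Hamiltonian, we are done for all remaining indices, so assume it is not.
\item Otherwise let $D$ be the one-vertex subgraph $\{z_i\}$ of $G-V(C_{i-1})$. Every vertex of $(V(G)\setminus X)\cup\{z_1,\ldots,z_{i-1}\}$ lies on $C_{i-1}$, so
\[
|N_G(V(D))\cap V(C_{i-1})|\ge d_G\bigl(z_i,(V(G)\setminus X)\cup\{z_1,\ldots,z_{i-1}\}\bigr)>\frac{n}{t+1}-1.
\]
Lemma~\ref{lem:cycle-extension} then gives a cycle $C'$ with $V(C_{i-1})\subsetneq V(C')$ and $V(C')\cap V(D)\ne\varnothing$. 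Since $V(D)=\{z_i\}$, the cycle $C'$ contains $z_i$, and we set $C_i=C'$.
\end{itemize}
In every case $C_i$ contains everything $C_{i-1}$ did, together with $z_i$. The cycle $C_q$ is the required one.

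Two points need care. The first is that $D$ must be a single vertex. Otherwise Lemma~\ref{lem:cycle-extension} only guarantees that the new cycle meets $D$, not that it contains $z_i$. The second is that the extension step only ever enlarges the vertex set. This is exactly why each $z_i$ may count its neighbours among the earlier $z_j$, which are already on the cycle.

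The only other thing to check is the existence of the starting cycle. This rests on Lemma~\ref{lem:shan-cycle} being applicable to the minimal cutset $X'\subseteq X$, which holds because $t\ge4.5$. I expect no real obstacle beyond these points.
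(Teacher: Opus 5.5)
Your proof is correct and follows the route the paper indicates when it says the lemma follows from Lemmas~\ref{lem:cycle-extension} and~\ref{lem:shan-cycle}. You obtain a starting cycle through $V(G)\setminus X$ from a minimal cutset $X'\subseteq X$, and then insert $z_1,\ldots,z_q$ in order by applying Lemma~\ref{lem:cycle-extension} to the one-vertex subgraph $D=\{z_i\}$, which forces each extended cycle to contain $z_i$.
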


We use the following specialization of the theorem of Hu, Tian, and Wei on cycles through a prescribed linear forest.

\begin{lemma}[{Hu--Tian--Wei~\cite[Theorem~3]{HuTianWei2001}}]\label{lem:HTW}
Let $q\ge0$. Let $J$ be a $(q+2)$-connected graph, and let $F$ be a linear forest in $J$ having $q$ edges and no isolated components. Then $J$ has a cycle containing every edge of $F$ and having order at least
\[
\min\set{|V(J)|,\frac23\sigma_3(J)-q}.
\]
The empty forest is included when $q=0$.
\end{lemma}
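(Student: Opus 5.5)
Since Lemma~\ref{lem:HTW} is only a specialization of \cite[Theorem~3]{HuTianWei2001}, the plan is first to check that the hypotheses match, and then, for completeness, to outline the standard longest-cycle argument that proves the bound.

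\emph{Matching the hypotheses.} I would check that the parameters correspond directly. Hu, Tian, and Wei consider a $k$-connected graph and a linear forest with $q\le k-2$ edges; we take $k=q+2$. Their bound on the length of a cycle through all edges of the forest is stated in terms of $\sigma_3$. A linear forest with no isolated components is exactly a set of marked edges whose components are paths. The degenerate case $q=0$ needs separate attention: it reduces to the Fraisse--Jung type bound that a $2$-connected graph has a cycle of order at least $\min\{|V(J)|,\tfrac23\sigma_3(J)\}$. When $\alpha(J)\le 2$, we have $\sigma_3(J)=\infty$, and the claim becomes Hamiltonicity. This follows from Lemma~\ref{lem:CE}, since $\kappa(J)\ge q+2\ge2\ge\alpha(J)$.

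\emph{Setting up the extremal argument.} For a self-contained argument, I would choose a cycle $C$ through all edges of $F$ with $|V(C)|$ maximum; such a cycle exists by the $(q+2)$-connectivity and the linkage part of the original theorem. Assume $C$ is not Hamiltonian and $|V(C)|<\tfrac23\sigma_3(J)-q$. Let $D$ be a component of $J-V(C)$. By the fan lemma there are $q+2$ paths from a vertex $u\in V(D)$ to $C$ that share only $u$. At most $q$ of their attachment points have their successor edge on $C$ marked. Hence at least two attachment points $a,b$ have unmarked successor edges $aa^+$ and $bb^+$.

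\emph{The degree-sum estimate.} The maximality of $C$ gives two facts. First, $\{u,a^+,b^+\}$ is independent, since otherwise $C$ could be rerouted through $u$ while keeping all marked edges. Second, the usual crossing argument applies: for each vertex $v$ of $C$, at most two of $u,a^+,b^+$ can be adjacent to $v$ or to the appropriate neighbour of $v$ without producing a longer cycle through $F$. Summing over the vertices of $C$ and of $D$ should yield
\[
d(u)+d(a^+)+d(b^+)\le \tfrac32\bigl(|V(C)|+q\bigr)+O(1).
\]
Here the extra term $q$ accounts for the vertices on marked edges, where rerouting is forbidden. This inequality contradicts the assumed upper bound on $|V(C)|$.

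The main obstacle is this counting step. One must show that each marked edge costs at most one unit in the sum, which is what produces the loss of exactly $q$. I would handle it by partitioning $C$ into the segments between consecutive attachment points and running the crossing argument segment by segment, skipping the vertices whose forward edge is marked.
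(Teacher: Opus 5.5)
Your opening step, reading the lemma off Theorem~3 of Hu, Tian, and Wei with $k=q+2$, is all the paper does; it gives no proof of its own. The justifications you add to make the statement self-contained are not correct as written.

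\emph{The case $\alpha(J)\le2$.} Here the required conclusion is a Hamiltonian cycle \emph{containing every edge of $F$}, not mere Hamiltonicity. Lemma~\ref{lem:CE} produces some Hamiltonian cycle with no control over which edges it uses, so for $q\ge1$ it does not give what you need. This is exactly the case the paper invokes in Lemma~\ref{lem:enlargement}, where $q\ge1$. It has to come either from the Hu--Tian--Wei theorem itself, under its convention for $\sigma_3$ when $\alpha<3$, or from a prescribed-edge version of the Chv\'atal--Erd\H{o}s theorem.

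\emph{The self-contained extremal sketch.} It does not prove the bound, for three reasons.
(a) The additive $O(1)$ is fatal. Your assumption is $\sigma_3(J)>\tfrac32(|V(C)|+q)$, which is fully compatible with $\sigma_3(J)\le\tfrac32(|V(C)|+q)+O(1)$. At best you would get $|V(C)|\ge\tfrac23\sigma_3(J)-q-O(1)$.
(b) The crossing count charges at most two of $u,a^+,b^+$ to each vertex of $C$. That gives only $d_C(u)+d_C(a^+)+d_C(b^+)\le 2|V(C)|$, i.e.\ the constant $\tfrac12$ rather than $\tfrac23$. Getting an average of $\tfrac32$ per cycle vertex is the real content of the $q=0$ case (the $k=2$ case of Bondy's conjecture) and of its linear-forest extension. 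It needs a much finer analysis than a vertex-by-vertex crossing argument.
(c) $d(u)$ includes the neighbors of $u$ inside $D$, and $a^+,b^+$ may have neighbors in other components of $J-V(C)$. Nothing in your sketch bounds these in terms of $|V(C)|$. A naive disjointness count brings in $n$, which does not appear in the bound.

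In addition, taking the existence of a cycle through $F$ from ``the linkage part of the original theorem'' is circular in a self-contained proof.

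As written, your proposal is valid only as the bare citation. You should drop the supplementary arguments or replace them with precise references. You should also check the exact hypotheses of Hu, Tian, and Wei: whether $q=0$ is covered and how $\sigma_3$ is defined when $\alpha<3$.
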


\section{Structural lemmas}

Throughout this section, $G$ is a 7-tough $(P_4\cup P_1)$-free graph on $n$ vertices, and
\[
a=\frac n8.
\]
The insertion threshold in Lemma~\ref{lem:ordered-insertion} is $a-1$.

\subsection{An asymmetric separation criterion}

\begin{lemma}\label{lem:asymmetric}
Let $X$ be a cutset of $G$. Suppose that $G-X$ has a component $D$ such that either
\begin{equation}\label{eq:side1}
|D|>2a-2,\qquad \sum_{D'\ne D}|D'|>a-1,
\end{equation}
or
\begin{equation}\label{eq:side2}
|D|>a-1,\qquad \sum_{D'\ne D}|D'|>2a-2.
\end{equation}
Then $G$ is Hamiltonian.
\end{lemma}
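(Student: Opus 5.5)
The plan is to reduce everything to Lemma~\ref{lem:ordered-insertion} with $t=7$, whose insertion threshold is $n/8-1=a-1$.

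\textbf{Step 1: pass to a minimal cutset.} Put $B=\bigcup_{D'\ne D}V(D')$. Choose $X'\subseteq X$ minimal subject to still separating $V(D)$ from $B$ in $G$. Then $X'$ is a minimal cutset. The component $A\supseteq V(D)$ of $G-X'$ and the components meeting $B$ are all contained in $V(G)\setminus X'$. Moreover, by minimality every $z\in X'$ has a neighbour on the $D$-side and a neighbour on the $B$-side of $G-X'$. Write $A$ for the $D$-side and $\mathcal B$ for the union of the remaining components of $G-X'$. Thus $|A|\ge|D|$ and $|\mathcal B|\ge|B|$.

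\textbf{Step 2: a degree bound for every vertex of $X'$.} Apply Lemma~\ref{lem:ordered-insertion} with $X=X'$ and $X_0=X'$ in any order. It then suffices to show
\[
d_G\bigl(z,V(G)\setminus X'\bigr)>a-1\qquad\text{for every } z\in X'.
\]
This already gives a cycle through every vertex of $G$, that is, a Hamiltonian cycle. I will use the forbidden $P_4\cup P_1$ to show that each $z\in X'$ is essentially complete to one side.

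Suppose $z$ has a non-neighbour in $A$. Since $A$ is connected and $z$ has a neighbour there, there is an edge $u p$ in $A$ with $zu\in E(G)$ and $zp\notin E(G)$. Take $v\in N(z)\cap\mathcal B$. Then $p\,u\,z\,v$ is an induced $P_4$. Every vertex of $\mathcal B$ is anticomplete to $\{p,u\}$. Hence freeness from $P_4\cup P_1$ forces every vertex of $\mathcal B\setminus\{v\}$ to be adjacent to $z$ or to $v$. Varying $v$ over $N(z)\cap\mathcal B$, I expect to conclude the following. Either $z$ is complete to $\mathcal B$, or the non-neighbours of $z$ in $\mathcal B$ are dominated in a controlled way. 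In the latter case a symmetric argument, starting from a non-neighbour in $\mathcal B$ and a $P_4$ through $A$, shows that $z$ is complete to $A$.

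So every $z\in X'$ is complete to $A$ or to $\mathcal B$. In either alternative \eqref{eq:side1} or \eqref{eq:side2}, both $|A|$ and $|\mathcal B|$ exceed $a-1$, giving the required degree bound.

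\textbf{Main obstacle.} The delicate case is a vertex $z$ with non-neighbours on both sides. Here the dichotomy ``complete to one side'' may fail when one side is not a single component. For instance, $\mathcal B$ may consist of several components $D'$ and $z$ may see only some of them, or $z$ may see only part of $A$.

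This is where I expect the asymmetric hypotheses $2a-2$ versus $a-1$ to enter. In this case I would try to show that $z$ is adjacent to all vertices of the larger side except those in a set that can be charged against the smaller side. The natural estimate is $d(z,V(G)\setminus X')\ge |\text{larger side}|-|\text{smaller side}|$, which exceeds $(2a-2)-(a-1)$ only when the inequalities are strict. Failing that, I would use an ordering of $X'$: first insert the vertices complete to a side, then the rest using their neighbours in the earlier vertices of $X'$. If neither works, I would use toughness on $(X'\setminus\{z\})\cup N(z)$ to bound the number of missed vertices by about $|X'|/7$.
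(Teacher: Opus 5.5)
\textbf{Gap in Step 2.} Your Step 1 is fine; it is essentially the paper's normalization. Step 2 fails, however: the claim that every $z\in X'$ is complete to $A$ or to $\mathcal B$ is false, and it fails in the case you did not flag.

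When $\mathcal B$ has at least two components, the dichotomy does hold. If $z$ is not complete to $A$, take $w\in N_G(z)$ in one component $K$ of $\mathcal B$. Every vertex of the other components is anticomplete to $p,u,w$, so the induced path $p\,u\,z\,w$ forces it to be adjacent to $z$. Repeating with a neighbour of $z$ in another component gives completeness to $K$ as well. This is the paper's case $\ell\ge3$.

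The real difficulty is when $G-X'$ has exactly two components. Then $p\,u\,z\,v$ only forces non-neighbours of $z$ in $\mathcal B$ to be adjacent to $v$. The ``symmetric argument'' only shows that each neighbour of $z$ in $A$ is complete to $A\setminus N_G(z)$, which is \eqref{eq:complete-rest}; it does not show that $z$ is complete to $A$.

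\textbf{A concrete counterexample.} Take disjoint cliques $D_1,D_2$ of order $N\ge78$ and a clique $Y$ of order $19$ joined to all other vertices. Add a vertex $x$ adjacent to $Y$ and to single vertices $u\in D_1$ and $w\in D_2$.
\begin{itemize}
\item Every induced $P_4$ avoids $Y$ and contains $u,x,w$. Hence it dominates the graph, and $G$ is $(P_4\cup P_1)$-free.
\item Every cutset contains $Y$ and at least one further vertex. It leaves at most three components, and three only when it also contains $u$ and $w$. Hence $G$ is $7$-tough.
\item For $X=Y\cup\{x\}$, which is already minimal, \eqref{eq:side1} holds.
\item Yet $d_G(x,V(G)\setminus X)=2$, and even $d_G(x)=21\le a-1$.
\end{itemize}
Lemma~\ref{lem:ordered-insertion} can never insert a vertex of total degree at most $a-1$. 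So none of your fallbacks can work while $x$ stays in the cutset: reordering $X'$, inserting via earlier cut vertices, or a toughness count of missed vertices.

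\textbf{The missing idea} is to move the low-degree cut vertices off the cutset. In the two-component case the paper sets $X_0=\{x\in X: d_G(x,D_1\cup D_2)\le a-1\}$. Using \eqref{eq:complete-rest} together with the comparability and covering properties \eqref{eq:comparable} and \eqref{eq:coverD2}, it shows that $U=\bigcup_{x\in X_0}N_G(x,D_1)$ has fewer than $a-1$ vertices.

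It then applies Lemma~\ref{lem:ordered-insertion} to the new cutset $X^*=(X\setminus X_0)\cup U$. It inserts $U$ first: each vertex of $U$ sees all of $D_1\setminus U$, which has more than $|D_1|-(a-1)>a-1$ vertices. It then inserts $X\setminus X_0$. The vertices of $X_0$ now lie in $G-X^*$ and are covered automatically. In the example above, this gives $X^*=Y\cup\{u\}$.

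This step is exactly where the hypothesis $|D_1|>2a-2$ is needed. Your Step 2, if it were true, would prove the lemma assuming only that both sides exceed $a-1$. That should have signalled that the asymmetric hypothesis was never being used.
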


\begin{proof}
We first normalize the cutset. At every stage, keep track of the component containing the original component $D$ and call it the distinguished component; all other components form the second side. If a vertex $x$ of the current cutset has no neighbor in the distinguished component, remove $x$ from the cutset. The distinguished component is unchanged, while $x$ joins, merges, or creates components on the second side. If $x$ has a neighbor in the distinguished component but no neighbor in any other component, remove $x$ from the cutset; it is then absorbed into the distinguished component. In the first operation the order of the distinguished side is unchanged and the total order of the second side does not decrease; in the second operation the distinguished side grows and the second side is unchanged. Thus the relevant strict inequalities in \eqref{eq:side1} or \eqref{eq:side2} are preserved throughout. Repeating the operations yields a cutset, still denoted by $X$, such that every vertex of $X$ has a neighbor in the distinguished component and in at least one other component.

Let the components of $G-X$ be $D_1,\ldots,D_\ell$, where $D_1$ is the distinguished component. If $\ell\ge3$, fix $x\in X$. If $x$ is complete to $D_1$, then
\[
d_G(x,G-X)\ge|D_1|>a-1.
\]
Suppose that $x$ is not complete to $D_1$. Since $D_1$ is connected and $x$ has a neighbor in $D_1$, there is an edge $uv\in E(D_1)$ such that $xu\in E(G)$ and $xv\notin E(G)$. Choose a neighbor $w\in D_2$ of $x$. Then
\[
w,x,u,v
\]
induces a $P_4$. Every vertex of $D_3\cup\cdots\cup D_\ell$ must therefore be adjacent to $x$. Repeating the same argument with a vertex of $D_3$ in place of $w$ shows that $x$ is complete to $D_2$ as well. Thus $x$ is complete to the whole second side, and again
\[
d_G(x,G-X)>a-1.
\]
Lemma~\ref{lem:ordered-insertion}, used in any order on $X$, now gives a Hamiltonian cycle.

It remains to consider $\ell=2$. After relabeling the two components if necessary, we may assume
\begin{equation}\label{eq:D12}
|D_1|>2a-2,\qquad |D_2|>a-1.
\end{equation}
Every vertex of $X$ has a neighbor in both components. We claim that $X$ is a minimal cutset. Indeed, let $X'\subsetneq X$ and choose $x\in X\setminus X'$. In $G-X'$, the vertex $x$ joins the connected graphs $D_1$ and $D_2$; every other vertex of $X\setminus X'$ has a neighbor in each of them and hence lies in the same component. Thus $G-X'$ is connected, proving the claim.

Put
\[
X_0=\{x\in X:d_G(x,D_1\cup D_2)\le a-1\}.
\]
If $X_0=\varnothing$, Lemma~\ref{lem:ordered-insertion} applies directly. Assume that $X_0\ne\varnothing$.

We need three elementary observations. First, if $x\in X_0$, $i\in\{1,2\}$, and $z\in N_G(x,D_i)$, then
\begin{equation}\label{eq:complete-rest}
z\text{ is complete to }D_i\setminus N_G(x,D_i).
\end{equation}
Indeed, in the other component $D_{3-i}$ the vertex $x$ has both a neighbor and a nonneighbor, by \eqref{eq:D12} and the definition of $X_0$. Since $D_{3-i}$ is connected, choose an edge $pq$ with $xp\in E(G)$ and $xq\notin E(G)$. The vertices $z,x,p,q$ induce a $P_4$. Any vertex in $D_i\setminus N_G(x,D_i)$ that is not adjacent to $z$ would be isolated from this path.

Second, if $x,y\in X_0$ are nonadjacent, then
\begin{equation}\label{eq:comparable}
N_G(x,D_1)\text{ and }N_G(y,D_1)\text{ are comparable by inclusion}.
\end{equation}
Suppose first that the neighborhoods of $x$ and $y$ in $D_2$ are incomparable. Choose
\[
p\in N_G(x,D_2)\setminus N_G(y,D_2),\qquad q\in N_G(y,D_2)\setminus N_G(x,D_2).
\]
By \eqref{eq:complete-rest}, $pq\in E(G)$, and $x,p,q,y$ induces a $P_4$. Hence every vertex of $D_1$ is adjacent to $x$ or $y$. This gives
\[
d_G(x,D_1)+d_G(y,D_1)\ge|D_1|>2a-2,
\]
so one of $x,y$ has more than $a-1$ neighbors in $D_1$, a contradiction.

Thus the neighborhoods in $D_2$ are comparable; assume
\[
N_G(y,D_2)\subseteq N_G(x,D_2).
\]
Since $d_G(x,D_2)\le a-1<|D_2|$, there is a common nonneighbor of $x$ and $y$ in $D_2$. If their neighborhoods in $D_1$ were incomparable, the same construction in $D_1$, together with this common nonneighbor, would yield an induced $P_4\cup P_1$. This proves \eqref{eq:comparable}.

Third, if $x,y\in X_0$ are adjacent and their neighborhoods in $D_1$ are incomparable, then
\begin{equation}\label{eq:coverD2}
D_2\subseteq N_G(x,D_2)\cup N_G(y,D_2).
\end{equation}
Choose
\[
p\in N_G(x,D_1)\setminus N_G(y,D_1).
\]
Since
\[
|N_G(x,D_1)\cup N_G(y,D_1)|\le2a-2<|D_1|,
\]
there is a vertex $q\in D_1$ adjacent to neither $x$ nor $y$. By \eqref{eq:complete-rest}, $pq\in E(G)$. Thus $y,x,p,q$ is an induced $P_4$, and every vertex of $D_2$ must be adjacent to $x$ or $y$.

Consider the family
\[
\mathcal N=\{N_G(x,D_1):x\in X_0\}.
\]
Let $M_1,\ldots,M_r$ be its distinct inclusion-maximal members, and choose $x_i\in X_0$ with $M_i=N_G(x_i,D_1)$. Distinct maximal members are incomparable. By \eqref{eq:comparable}, the vertices $x_1,\ldots,x_r$ are pairwise adjacent. By \eqref{eq:coverD2},
\[
N_G(x_i,D_2)\cup N_G(x_j,D_2)=D_2\qquad(i\ne j).
\]
Consequently, the sets
\[
D_2\setminus N_G(x_i,D_2),\qquad i=1,\ldots,r,
\]
are pairwise disjoint, and hence
\begin{equation}\label{eq:sumD2}
\sum_{i=1}^r d_G(x_i,D_2)\ge(r-1)|D_2|.
\end{equation}
Set
\[
U=\bigcup_{x\in X_0}N_G(x,D_1)=\bigcup_{i=1}^r M_i.
\]
If $r=1$, then $x_1$ has at least one neighbor in $D_2$, and therefore
\[
|U|=|M_1|\le a-2<a-1.
\]
If $r\ge2$, then \eqref{eq:sumD2} and \eqref{eq:D12} give
\[
\begin{aligned}
|U|&\le\sum_{i=1}^r|M_i|\\
&\le r(a-1)-\sum_{i=1}^r d_G(x_i,D_2)\\
&\le r(a-1)-(r-1)|D_2|<a-1.
\end{aligned}
\]
Every vertex of $U$ is complete to $D_1\setminus U$ by \eqref{eq:complete-rest}, and
\[
|D_1\setminus U|>|D_1|-(a-1)>a-1.
\]
Now put
\[
X^*=(X\setminus X_0)\cup U.
\]
The nonempty set $D_1\setminus U$ has no neighbor in $X_0$, and it is anticomplete to $D_2$. Hence $X^*$ is a cutset. Order the vertices of $X^*$ by listing $U$ first and $X\setminus X_0$ second. For $u\in U$,
\[
d_G(u,V(G)\setminus X^*)\ge|D_1\setminus U|>a-1.
\]
For $x\in X\setminus X_0$, after all vertices of $U$ have been inserted,
\[
d_G\bigl(x,(V(G)\setminus X^*)\cup U\bigr)\ge d_G(x,D_1\cup D_2)>a-1.
\]
Lemma~\ref{lem:ordered-insertion} gives a Hamiltonian cycle.
\end{proof}

\subsection{Cycles covering prescribed cograph sets}

To apply Lemma \ref{lem:asymmetric}, we need a way to construct an initial cycle through the low-degree core; the following lemma provides this.

\begin{lemma}\label{lem:cograph-cover}
Let $G$ be a 4.5-tough $(P_4\cup P_1)$-free graph on at least three vertices, and let $Z\subseteq V(G)$ induce a $P_4$-free graph. Then $G$ has a cycle containing every vertex of $Z$.
\end{lemma}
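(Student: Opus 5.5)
We reduce to the path-cover compression machinery with $S=V(G)\setminus Z$, and we split into cases according to whether $G$ is complete and according to the size of $S$.

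\emph{Trivial cases.} If $G$ is complete, then $G$ is Hamiltonian, since it has at least three vertices, and there is nothing to prove. So assume that $G$ is non-complete. Then $\kappa(G)\ge 9$ and $n\ge 10$. If $Z$ is empty or a single vertex, any cycle through a vertex of $Z$ works; one exists because $\delta(G)\ge 9$. If $Z=V(G)$, then $G$ is a $1$-tough cograph, and Jung's theorem~\cite{Jung1978} gives a Hamiltonian cycle. Hence we may assume that $S=V(G)\setminus Z$ is nonempty and that $G-S=G[Z]$ is nonempty and $P_4$-free. If $|S|=1$, say $S=\{s\}$, then $G-s$ is a cograph. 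Since deleting a vertex reduces toughness by at most a bounded amount, $G-s$ is still $1$-tough, so Jung's theorem gives a Hamiltonian cycle of $G-s$, which contains $Z$. We therefore assume $|S|\ge 2$.

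\emph{Main case: $|S|\ge2$.} Global $4.5$-toughness gives $4$- and $4.5$-toughness with respect to $S$. By Lemma~\ref{lem:shan-cover} we obtain an $S$-matched path-cover of $G-S$. When $s(G-S)\le 0$ this cover is a single $(x,y)$-path $Q$ with interior exactly $Z$ and $x\ne y$ in $S$. When $s(G-S)\ge1$ it is a basic cover with $m=s(G-S)$ components, each having both endvertices in $S$. Compressing as in the proof of Lemma~\ref{lem:compression} yields a linear forest $F$ on $S$ with $m$ marked edges. It then suffices to find a cycle of $H=G[S]+E(F)$ through all marked edges, which expands to a cycle of $G$ covering $Z$.

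\emph{The single-path case.} Here a direct argument is easiest. Since $\kappa(G)\ge 9$, the graph $G-(V(Q)\setminus\{x,y\})$ is still connected enough: if $x$ and $y$ were separated by $Z$ in $G$, use toughness. Concretely, I would show that $G-Z$ contains an $(x,y)$-path. If it did not, $Z$ would be a cutset of $G$ separating $x$ from $y$. In that case, instead of fighting the separation, choose $X\subseteq Z$ a minimal cutset inside $Z$ and apply Lemma~\ref{lem:shan-cycle}. Closing $Q$ with an $(x,y)$-path in $G-Z$ gives the cycle.

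\emph{The general case $m\ge2$.} We need a cycle through the $m$ marked edges in $H$. The plan is to apply Lemma~\ref{lem:HTW} to $J=H$ with $q=m$, which requires $H$ to be $(m+2)$-connected. The main obstacle is exactly this connectivity. I would prove it by taking a cutset $T$ of $H$ with $|T|\le m+1$ and lifting it to $G$: the set $T\cup W$, where $W$ is the maximal scattering set used in the basic cover, leaves at least two components, and counting with $t=4.5$ together with $|S|\ge 4.5(m+|W|)-|W|$ (as in the bound of Lemma~\ref{lem:compression}) gives a contradiction. Since only containment of the marked edges is needed, not length, the bound $\min\{|V(J)|,\frac23\sigma_3-q\}$ is irrelevant here. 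Alternatively, the fallback is Shan's route: a minimal cutset inside $S\cup W$ and Lemma~\ref{lem:shan-cycle}, whose conclusion covers $G-X\supseteq Z$.
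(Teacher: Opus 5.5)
Your reductions for $G$ complete, $|Z|\le1$, $Z=V(G)$ and $|S|=1$ are fine. The main case, however, does not go through.

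\textbf{The single-path case.} You need an $(x,y)$-path in $G-Z=G[S]$, and none need exist. Lemma~\ref{lem:shan-cover} gives you no control over which $x,y$ occur, and $G[S]$ can be disconnected. For example, two anticomplete cliques that are both complete to a clique $Z$ with $|Z|\ge9$ form a $4.5$-tough cograph with $G[S]$ disconnected. Your fallback is also backwards. Lemma~\ref{lem:shan-cycle} applied to a minimal cutset $X\subseteq Z$ gives a cycle through $V(G)\setminus X$, which omits exactly the vertices of $X\subseteq Z$; that lemma only helps when the cutset avoids $Z$. This branch can be rescued differently: $s(G[Z])\le0$ means $G[Z]$ is a $1$-tough cograph, so it is Hamiltonian by Jung when $|Z|\ge3$. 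But the same endpoint problem recurs when $s(G[Z])=1$, which your split does not treat and where Jung gives nothing.

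\textbf{The case $m\ge2$.} The claimed $(m+2)$-connectivity of $H=G[S]+E(F)$ is false. Take $G=\overline{K_9}\vee(K_{21}\cup K_{21})$ and $Z=\overline{K_9}$. Then $G$ is a cograph, and it is $4.5$-tough because every cutset contains $Z$ or $S$. Also $s(G[Z])=9$ and $W=\varnothing$. Yet at most $9$ marked edges join the two cliques, so $\kappa(H)\le9<11$. Your lifting fails because vertices of $Z$ have many edges into $S$ besides the two used by the path-cover, so $T\cup W$ need not separate $G$ at all. Your second fallback has the same flaw as the first: a minimal cutset $X\subseteq S\cup W$ gives a cycle that misses $X\cap W\subseteq Z$, and you supply no way to put those vertices back.

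\textbf{The missing idea is that re-insertion.} With it, the paper needs neither compression nor Lemma~\ref{lem:HTW}:
\begin{enumerate}[label=\textup{(\arabic*)}]
\item If $G[Z]$ is disconnected, a minimal cutset inside $V(G)\setminus Z$ together with Lemma~\ref{lem:shan-cycle} suffices.
\item If $G[Z]$ is connected but not Hamiltonian, Dirac gives a minimum vertex cut $U$ of $G[Z]$ with $|U|<|Z|/2$. Put $R=Z\setminus U$, so $|R|>|U|$. Minimality of $U$ and $P_4$-freeness of $G[Z]$ make $U$ complete to $R$.
\item Lemma~\ref{lem:shan-cycle}, applied to a minimal cutset inside $(V(G)\setminus Z)\cup U$, gives a cycle through $R$.
\item Each vertex of $U$ missed by this cycle replaces its own arc between consecutive $R$-vertices whose interior contains no vertex of $Z$. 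There are enough such arcs because $|R|>|U|$.
\end{enumerate}
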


\begin{proof}
If $G$ is complete, the assertion is clear. A 4.5-tough non-complete graph is at least 9-connected. In particular it is 2-connected, and any two prescribed vertices of a 2-connected graph lie on a common cycle. Thus the assertion is immediate when $|Z|\le2$. We assume $|Z|\ge3$.

Suppose first that $G[Z]$ is disconnected. Then $V(G)\setminus Z$ is a cutset. Choose a minimal cutset $X\subseteq V(G)\setminus Z$. By Lemma~\ref{lem:shan-cycle}, $G$ has a cycle containing every vertex of $G-X$, and hence every vertex of $Z$.

We may therefore assume that $G[Z]$ is connected. If $G[Z]$ is Hamiltonian, there is nothing to prove. Suppose that it is not Hamiltonian. By Dirac's theorem~\cite{Dirac1952},
\[
\delta(G[Z])<\frac{|Z|}{2}.
\]
The graph $G[Z]$ is non-complete, so it has a minimum vertex cut $U$ satisfying
\begin{equation}
|U|\le\delta(G[Z])<\frac{|Z|}{2}.
\end{equation}
Put $R=Z\setminus U$ and $r=|R|$. Then $G[R]$ is disconnected and
\begin{equation}\label{eq:rU}
r>|U|.
\end{equation}
The minimum vertex cut $U$ is inclusion-minimal. Moreover, every $u\in U$ has a neighbor in every component of $G[R]$: if $u$ had no neighbor in one such component, then $U\setminus\{u\}$ would still disconnect $G[Z]$. We now verify directly that every vertex of $U$ is complete to $R$.  Fix $u\in U$ and a component $K$ of $G[R]$.  If $u$ were not complete to $K$, then, since $u$ has a neighbor in $K$ and $K$ is connected, there would be an edge $ab\in E(K)$ with $ua\in E(G)$ and $ub\notin E(G)$.  Choose a different component $K'$ of $G[R]$ and a neighbor $c\in V(K')$ of $u$.  The vertices $c,u,a,b$ would induce a $P_4$, contradicting that $G[Z]$ is $P_4$-free.  Hence $u$ is complete to every component of $G[R]$, and therefore $U$ is complete to $R$.

The set
\[
X=(V(G)\setminus Z)\cup U
\]
is a cutset because $G-X=G[R]$ is disconnected. Choose a minimal cutset $X'\subseteq X$. Lemma~\ref{lem:shan-cycle} gives a cycle $C$ containing every vertex of $G-X'$. In particular,
\[
R\subseteq V(C).
\]
Let
\[
U_{\rm in}=U\cap V(C),\qquad U_{\rm out}=U\setminus V(C).
\]
List the vertices of $R$ in their cyclic order on $C$ as
\[
r_1,r_2,\ldots,r_{r}.
\]
For each $i$, let $Q_i$ be the $r_i$--$r_{i+1}$ arc of $C$ whose interior contains no vertex of $R$, where indices are taken modulo $r$. At most $|U_{\rm in}|$ of these $r$ arcs contain a vertex of $U_{\rm in}$ in their interiors. By \eqref{eq:rU}, the number of remaining arcs is at least
\[
r-|U_{\rm in}|>|U|-|U_{\rm in}|=|U_{\rm out}|.
\]

Assign to every $x\in U_{\rm out}$ a distinct arc $Q_i$ whose interior contains no vertex of $Z$. Since $x$ is adjacent to every vertex of $R$, replace the assigned arc $Q_i$ by the two-edge path
\[
r_i x r_{i+1}.
\]
For each $i$, let $L_i$ denote either the original arc $Q_i$ or its replacement. The interiors of $L_1,\ldots,L_r$ are pairwise disjoint, and consecutive members meet only at their common vertex in $R$. Their union, taken in the cyclic order
\[
r_1,L_1,r_2,L_2,\ldots,r_r,L_r,r_1,
\]
is therefore a connected 2-regular graph, and hence a simple cycle. No replaced arc contains a vertex of $U_{\rm in}$ in its interior, while the inserted vertices are precisely the distinct vertices of $U_{\rm out}$. The new cycle consequently contains
\[
R\cup U_{\rm in}\cup U_{\rm out}=Z.
\]

\end{proof}

The following observation is an immediate consequence of the
standard inequality
$\rho(H)\ge \max\{1,s(H)\}$ \cite[p.~21]{GiakoumakisRousselThuillier1997}, where $\rho(H)$ denotes the minimum
number of pairwise vertex-disjoint paths covering $V(H)$. For completeness, we include a short proof.

\begin{lemma}\label{lem:scattering-cycle}
Let $H$ be an induced subgraph of an ambient graph, with $s(H)\ge1$, and let $C$ be a cycle of the ambient graph such that $V(H)\subseteq V(C)$. Then
\[
|V(C)\setminus V(H)|\ge s(H).
\]
Consequently,
\[
|V(C)|\ge|V(H)|+s(H).
\]
\end{lemma}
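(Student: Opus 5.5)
The plan is to use a scattering set of $H$ to cut the cycle $C$ into pieces and count them. Since $s(H)\ge1$, the graph $H$ is non-complete; choose $U\subseteq V(H)$ with $c(H-U)\ge2$ and $c(H-U)-|U|=s(H)$. Put $k=c(H-U)$ and let $K_1,\ldots,K_k$ be the components of $H-U$. The goal is to show that
\[
|V(C)\setminus V(H)|\ge k-|U|=s(H).
\]

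\textbf{Step 1: delete $U$ from $C$.} Consider $C-U$. If $U=\varnothing$, then $C$ itself is a single closed walk. Otherwise $C-U$ is a union of at most $|U|$ vertex-disjoint paths, because deleting $j\ge1$ vertices from a cycle leaves at most $j$ arcs. In either case, the set $V(C)\setminus U$ is covered by at most $\max\{1,|U|\}$ "segments": maximal arcs of $C$ whose interiors avoid $U$.

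\textbf{Step 2: count transitions between components.} Every vertex of $H-U$ lies on $C$ and hence in some segment. Traverse one segment and record the components $K_i$ that its $H$-vertices visit, in order. Two consecutive $H-U$ vertices on the segment that lie in different components cannot be adjacent in the ambient graph, since $H$ is induced and no $H$-edges join distinct components of $H-U$. They also cannot be separated only by vertices of $U$, because the segment contains no $U$-vertices in its interior. So between any two consecutive visited vertices lying in different components there is at least one vertex of $V(C)\setminus V(H)$.

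\textbf{Step 3: sum up.} A segment that meets $p$ distinct components therefore contains at least $p-1$ outside vertices. Summing over segments, the number of outside vertices on $C$ is at least $k$ minus the number of segments.
\begin{itemize}
\item If $U\ne\varnothing$, there are at most $|U|$ segments, so the count is at least $k-|U|$.
\item If $U=\varnothing$, the cycle is closed. Traversing it around, it passes between components at least $k\ge2$ times cyclically, so it contains at least $k=s(H)$ outside vertices.
\end{itemize}
Either way $|V(C)\setminus V(H)|\ge s(H)$, and adding $|V(H)|$ gives the second inequality.

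The main obstacle is the bookkeeping at the ends of segments when $U\ne\varnothing$. A segment may begin and end at vertices of $U$, and the count must not be double-charged. I would handle this by counting outside vertices in the open interiors of the segments: these interiors are pairwise disjoint. Each component change strictly inside a segment's interior then forces a private outside vertex, which gives the bound at least $k-\#\text{segments}$.
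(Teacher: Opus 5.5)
Your argument is correct, but it reverses the order of the two deletions used in the paper. The paper deletes the $r=|V(C)\setminus V(H)|$ outside vertices first. Because $H$ is induced and $r\ge1$, what remains is a path-cover of $H$ with at most $r$ paths, and the paper then quotes the standard inequality $s(H)\le\rho(H)$ for the path-cover number. You instead delete a scattering set $U$ first and charge each change of component of $H-U$ inside an arc of $C-U$ to a private outside vertex. That is exactly the counting behind $\rho(H)\ge s(H)$, carried out directly on the cycle, so your proof is self-contained and never mentions path covers. A side benefit is that you treat $U=\varnothing$ (a cyclic count giving at least $k$ outside vertices) separately from $U\ne\varnothing$ (at most $|U|$ arcs). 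This gives the bound without first establishing $r\ge1$, which the paper states from $s(H)\ge1$ without comment; it holds because a Hamiltonian $H$ is $1$-tough and so has $s(H)\le0$. In exchange, the paper's version is a two-line reduction to a known fact and exhibits the intermediate bound $\rho(H)\le r$ explicitly.
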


\begin{proof}
Indeed, put $r=|V(C)\setminus V(H)|$. Since $s(H)\ge1$, we have
$r\ge1$. Deleting these $r$ vertices from $C$ gives a path-cover
of $H$ with at most $r$ components. Hence
$s(H)\le\rho(H)\le r$.
\end{proof}
\subsection{Completion of a sufficiently long cycle}
Once a sufficiently large cycle through the cograph core is available, the following completion lemma forces it to be Hamiltonian.
\begin{lemma}\label{lem:long-completion}
Let $G$ be a 7-tough $(P_4\cup P_1)$-free graph on $n$ vertices, and put
\[
\lambda=\ceil{\frac n7}.
\]
If $G$ has a cycle $C$ such that
\[
|V(C)|\ge\lambda
\]
and every vertex outside $C$ has degree at least $n/4$, then $G$ is Hamiltonian.
\end{lemma}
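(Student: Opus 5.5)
Choose $C$ to be a longest cycle of $G$ among those with $|V(C)|\ge\lambda$; such a cycle exists by hypothesis. If $G$ is complete there is nothing to prove, so assume $G$ is non-complete. Suppose $C$ is not Hamiltonian and derive a contradiction from Lemma~\ref{lem:cycle-extension} with $t=7$. That lemma needs a vertex (or connected subgraph) $D$ outside $C$ with $|N_G(V(D))\cap V(C)|>n/8-1=a-1$. It then yields a strictly longer cycle $C'\supsetneq V(C)$, and $C'$ still has at least $\lambda$ vertices, which contradicts maximality. So the whole argument reduces to the following: in a non-Hamiltonian situation, some vertex $v\notin V(C)$ has more than $a-1$ neighbors on $C$.

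Fix $v\notin V(C)$. We know $d_G(v)\ge n/4=2a$, so the only way $v$ fails is that $d_G(v,V(C))\le a-1$. Then $v$ has more than $a$ neighbors in $G-V(C)$. Let $D$ be the component of $G-V(C)$ containing $v$, so $|D|>a$.

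If $|N_G(V(D))\cap V(C)|>a-1$, apply Lemma~\ref{lem:cycle-extension} to $D$ itself and we are done. Otherwise $X=N_G(V(D))\cap V(C)$, together with the remaining vertices outside $C$ not in $D$, separates $D$ from the rest of $C$. More precisely, take $X'=N_G(V(D))$. This is a cutset with $D$ as one component, and the other side contains $V(C)\setminus X$. That other side has order at least $\lambda-(a-1)$, and we want it to exceed $a-1$ or $2a-2$.

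Here we invoke Lemma~\ref{lem:asymmetric}. We have $|D|>a$, and we need the condition $|D|>2a-2$ or the second side to be larger. The second side contains $V(C)\setminus X$ plus every other vertex outside $C$ and outside $X'$. Any other vertex $w\notin V(C)$ outside $D$ also has degree at least $2a$ and lies in a different component. So if such a $w$ exists, its component plus $C$-part is large.

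To strengthen $|D|$, use that $D$ is a component all of whose vertices have degree at least $2a$ but at most $|X'|$ neighbors outside $D$. Toughness gives $|X'|\ge 14$ only, so a cleaner route is needed. Since the second side contains at least $\lambda-(a-1)\ge n/7-n/8+1>1$ vertices, the asymmetric criterion requires $|D|>2a-2$ with the second side larger than $a-1$, or the reverse.

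The main obstacle is exactly this size bookkeeping, i.e.\ showing that either $|D|>2a-2$ or the second side exceeds $2a-2$. Note that $n/7-(a-1)$ alone is smaller than $a-1$, so the count has to be done more carefully. My plan here is to replace $C$-side counting with neighborhoods: pick $u\in V(C)\setminus X$. Either $u$ has degree at least $2a$ with fewer than $|X'|$ neighbors in $X'$, or we use the $(P_4\cup P_1)$-freeness (as in the proof of Lemma~\ref{lem:asymmetric}, via \eqref{eq:complete-rest}-type arguments) to force vertices of $X$ to be complete to one side. In that case Lemma~\ref{lem:ordered-insertion} applies directly to the cutset $X'$, inserting its vertices in an order where each has more than $a-1$ neighbors on the already-covered set, and this gives a Hamiltonian cycle. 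Similarly, $|D|>a$ together with $v$ having more than $a$ neighbors inside $D$ should be enough, after the normalization step of Lemma~\ref{lem:asymmetric}, to meet \eqref{eq:side2} whenever the far side exceeds $2a-2$. The residual case is a small far side of order at most $2a-2$ together with a small $D$, of order at most $2a-2$. Then $|X'|\ge n-4a=n/2$, and toughness together with the high degrees of the vertices of $D$ should make Lemma~\ref{lem:ordered-insertion} applicable to $X'$.
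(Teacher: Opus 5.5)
\textbf{There is a genuine gap.} The proposal stops exactly where the proof has to happen. You identify the size bookkeeping as the main obstacle, but then only offer a plan phrased with ``should'', so the case analysis is never carried out. Two concrete ideas are missing.

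First, $D$ is a component of $G-V(C)$, so its whole neighborhood lies on $C$. Hence your $X'=N_G(V(D))$ is just $X$, with $|X'|=r\le a-1$. The far side of the cutset $X$ therefore has exactly $R=n-r-|D|$ vertices, not merely those of $V(C)\setminus X$. With this, the case $|D|\le 2a-2$ is immediate. Every vertex of $D$ has at least $2a$ neighbors, at most $r$ of them outside $D$, so $|D|\ge 2a-r+1\ge a+2$. Meanwhile $R\ge 8a-(a-1)-(2a-2)=5a+3$, which is \eqref{eq:side2}. Your ``residual case'' with $|X'|\ge n/2$ therefore cannot occur; it would force $n\le 5a-5$.

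Second, when $|D|>2a-2$ you need $R>a-1$. You correctly note that $\lambda-(a-1)<a-1$, so counting $V(C)\setminus X$ is not enough. The paper closes this with a toughness estimate. Because $C$ cannot be extended through $D$, the standard argument behind Lemma~\ref{lem:cycle-extension} shows that no two vertices of $X$ are consecutive on $C$. It also shows that the successor set $X^+$ is independent and anticomplete to $D$. Then $V(G)\setminus(V(D)\cup X^+)$ is a cutset of order $R$. Deleting it leaves $D$ together with the $r\ge1$ isolated vertices of $X^+$, so $R\ge 7(r+1)$. Combined with $R\ge\lambda-r$, this gives $8R\ge 7\lambda+7\ge n+7$, so $R>a-1$ and \eqref{eq:side1} applies. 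Your suggested detour through $(P_4\cup P_1)$-freeness and ordered insertion on $X'$ is not needed; that work is already done inside Lemma~\ref{lem:asymmetric}. As stated, it also does not produce the required bound.

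\textbf{The choice of $C$ is also wrong.} A longest cycle among all cycles of order at least $\lambda$ is simply a longest cycle of $G$. The vertices outside it need not have degree at least $n/4$, so your assertion $d_G(v)\ge n/4$ is unjustified. You must maximize over cycles satisfying both hypotheses. Any cycle $C'$ with $V(C)\subsetneq V(C')$ stays in this family, and that is what makes the maximality argument work, including the properties of $X^+$.

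Your opening claim, that everything reduces to finding an outside vertex with more than $a-1$ neighbors on $C$, is also not how the argument goes. In the remaining cases no extension exists, and Hamiltonicity comes directly from Lemma~\ref{lem:asymmetric}.
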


\begin{proof}
Put $a=n/8$. If $G$ is complete, the conclusion is immediate. Thus $G$ is connected. Among all cycles satisfying the hypotheses, choose one of maximum order, still denoted by $C$. Suppose that $C$ is not Hamiltonian, and let $D$ be a component of $G-V(C)$. Since $G$ is connected, $D$ has a neighbor on $C$. Put
\[
X=N_G(V(D))\cap V(C),\qquad r=|X|,\qquad d=|D|.
\]
We claim that
\begin{equation}\label{eq:rbound}
r\le a-1.
\end{equation}
Indeed, if $r>a-1=n/(7+1)-1$, then Lemma~\ref{lem:cycle-extension} gives a cycle $C'$ satisfying $V(C)\subsetneq V(C')$. Consequently $|V(C')|>|V(C)|\ge\lambda$, while
\[
V(G)\setminus V(C')\subseteq V(G)\setminus V(C).
\]
Every vertex outside $C'$ therefore still has degree at least $n/4$. Thus $C'$ belongs to the same family of cycles over which $C$ was chosen maximal, contradicting the maximality of $C$. This proves \eqref{eq:rbound}.
The same maximality argument applies to every cycle extension constructed in the proof of Lemma~\ref{lem:cycle-extension}: each such extension would still have order at least $\lambda$, and every vertex outside it would still have degree at least $n/4$. Hence none of those extensions can occur. Following that proof, after orienting $C$ and putting
\[
X^+=\{x^+:x\in X\},
\]
we conclude that no two vertices of $X$ are consecutive on $C$, that $X^+$ is independent, and that $X^+$ is anticomplete to $D$. Consequently $X\cap X^+=\varnothing$ and $|X^+|=|X|=r$.

Let
\[
R=n-r-d
\]
be the total order of the components of $G-X$ other than $D$.

Suppose first that $d\le2a-2$. Every vertex of $D$ has degree at least $n/4=2a$, and all its neighbors outside $D$ lie in $X$. Hence, by \eqref{eq:rbound},
\[
d\ge2a-r+1\ge a+2>a-1.
\]
Moreover,
\[
R=n-r-d\ge8a-(a-1)-(2a-2)=5a+3>2a-2.
\]
Lemma~\ref{lem:asymmetric}, applied to the cutset $X$, gives a Hamiltonian cycle.

It remains to suppose that $d>2a-2$. The vertices of $C-X$ lie in components of $G-X$ other than $D$, so
\begin{equation}\label{eq:Rcycle}
R\ge|C|-r\ge\lambda-r.
\end{equation}
Here $r\ge1$ by the connectedness observation above. Let
\[
Q=V(G)\setminus\bigl(V(D)\cup X^+\bigr).
\]
The graph $G-Q$ consists of the connected graph $D$ and the $r$ isolated vertices of $X^+$. Thus $Q$ is a cutset. Using $|X^+|=r$, we have
\[
|Q|=n-d-r=R.
\]
Therefore 7-toughness gives
\begin{equation}\label{eq:Rrough}
R=|Q|\ge7(r+1).
\end{equation}
If $R\le a-1$, then \eqref{eq:Rcycle} implies
\[
r\ge\lambda-a+1.
\]
Using $\lambda\ge n/7=8a/7$ in \eqref{eq:Rrough}, we obtain
\[
R\ge7(r+1)\ge7(\lambda-a+2)\ge a+14>a-1,
\]
a contradiction. Hence $R>a-1$, and Lemma~\ref{lem:asymmetric} again yields a Hamiltonian cycle.
\end{proof}

\subsection{Separation and enlargement of a cograph core}
When no large cycle through the core exists, the failure must be caused by a separation of the auxiliary graph; the next lemma extracts a cograph obstruction from such a separation.
\begin{lemma}\label{lem:separation-obstruction}
Let $G$ be a $(P_4\cup P_1)$-free graph on $n$ vertices. Let $A,B,T$ be pairwise disjoint vertex sets such that $A$ is anticomplete to $B$. Suppose that
\[
G[T]\text{ is }P_4\text{-free},\qquad d_G(x)<\frac n4\quad(x\in T),
\]
and
\[
|B|>\frac n2,\qquad |A|+|B|>\frac{3n}{4}-1.
\]
Then $G[A\cup T]$ is $P_4$-free.
\end{lemma}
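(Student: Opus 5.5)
The plan is to argue by contradiction. Suppose $G[A\cup T]$ contains an induced path $P=p_1p_2p_3p_4$. The only tool is $(P_4\cup P_1)$-freeness: every vertex of $G-V(P)$ must have a neighbor on $P$. Vertices of $B$ have no neighbors in $A$, so each vertex of $B$ must be adjacent to a vertex of $V(P)\cap T$. The degree bound $d_G(x)<n/4$ on $T$ then limits how many such vertices there can be. Write $k=|V(P)\cap T|$.

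\textbf{Trivial cases ($k=0$ and $k=4$).} Since $|B|>n/2>0$, the set $B$ is nonempty. If $k=0$, then $P\subseteq A$ and any vertex of $B$ is isolated from $P$, a contradiction. If $k=4$, then $P$ is an induced $P_4$ inside $G[T]$, which is excluded. So $1\le k\le3$.

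\textbf{Case $k\le2$.} Here $B\subseteq\bigcup_{t\in V(P)\cap T}N_G(t)$, so
\[
|B|\le\sum_{t\in V(P)\cap T}d_G(t)<k\cdot\frac n4\le\frac n2,
\]
contradicting $|B|>n/2$. This case uses only the condition on $|B|$, and it is the base I would reduce to from now on.

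\textbf{Case $k=3$: reduction.} Let $a$ be the unique vertex of $P$ in $A$ and $t_1,t_2,t_3$ the vertices in $T$. The key step is to show that every vertex $v\in(A\cup B)\setminus\{a\}$ is adjacent to some $t_i$.
\begin{itemize}
\item If $v\in B$, or $v\in A$ is nonadjacent to $a$, this follows directly from $(P_4\cup P_1)$-freeness.
\item If $v\in A$ is adjacent to $a$ but to no $t_i$, I would build a new induced $P_4$ in $A\cup T$ with at most two $T$-vertices, which the case $k\le2$ already rules out. If $a$ is an endvertex, say $P=at_1t_2t_3$, the path $v\,a\,t_1t_2$ works. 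If $a$ is interior, say $P=t_1at_2t_3$, the path $v\,a\,t_2t_3$ works. In either case inducedness follows because $v$ sees only $a$ on $P$.
\end{itemize}

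\textbf{Case $k=3$: counting.} We now have $(A\cup B)\setminus\{a\}\subseteq\bigcup_i N_G(t_i)$, and every $t_i$ lies outside $A\cup B$. To sharpen the count, discount the path edges. $P$ has three edges, giving six endpoint incidences; at most two of them are at $a$, so at least four are at the $t_i$. Each such incidence is a neighbor of some $t_i$ lying on $P$, hence it is either $a$ or another $t_j$, and in neither case a vertex of $(A\cup B)\setminus\{a\}$. Therefore
\[
|A|+|B|-1\le\sum_{i=1}^3 d_G(t_i)-4<\frac{3n}{4}-4.
\]
This contradicts $|A|+|B|>\frac{3n}{4}-1$.

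\textbf{Main obstacle.} The hard step is the case $k=3$. The single $A$-vertex $a$ has unbounded degree, and the hypothesis on $|A|+|B|$ alone is not enough unless the $A$-neighbors of $a$ are controlled. The reduction above, which swaps in such a neighbor to produce a $P_4$ with fewer $T$-vertices, is the part I expect to need careful verification of inducedness in each position of $a$ on $P$.
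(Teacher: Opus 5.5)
Your proof is correct, and its skeleton matches the paper's. In both, an induced $P_4$ with at least two $A$-vertices is ruled out by $|B|>n/2$. The case $V(P)=\{a,t_1,t_2,t_3\}$ is then ruled out by the count $\sum_i d_G(t_i)\ge |A|+|B|+3$, once every vertex of $A\cup B$ is known to see $\{t_1,t_2,t_3\}$.

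The difference is in how you exclude $A_0:=A\setminus N_G(\{t_1,t_2,t_3\})\neq\varnothing$. The paper runs the count first, which only yields $A_0\neq\varnothing$, and then refutes this by a case split on the position of $a$:
\begin{itemize}
\item For $a$ interior, it uses the same swapped path $u\,a\,x_2x_3$ that you use. It first proves $A_0\subseteq N_G(a)$ by a detour through vertices of $B$ with a unique neighbor in $X$. You get this immediately, since a vertex of $A_0$ nonadjacent to $a$ is already isolated from $P$.
\item For $a$ an endvertex, it partitions $B$ by the number of neighbors in $X$ and shows that more than $n/4$ vertices of $B$ have a unique neighbor in $X$. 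It then forces all of them onto the middle vertex $x_2$, so $d_G(x_2)>n/4$.
\end{itemize}
You observe that $v\,a\,t_1t_2$ is also induced when $a$ is an endvertex, so this subcase falls back into the two-$T$-vertex case. That makes the $B_1$/$B_2$ analysis unnecessary. Your argument is thus a uniform and somewhat shorter version of the paper's, using exactly the same hypotheses; the paper's ordering gains nothing extra here.
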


\begin{proof}
Suppose that $P$ is an induced $P_4$ in $G[A\cup T]$. Since $G[T]$ is $P_4$-free, the path contains a vertex of $A$.

It cannot contain at least two vertices of $A$. In that case it contains at most two vertices of $T$. Their degree sum is smaller than $n/2$, while $|B|>n/2$. Hence some vertex of $B$ is nonadjacent to all $T$-vertices of $P$. It is also nonadjacent to the $A$-vertices of $P$, and therefore is isolated from $P$, a contradiction.

Thus
\[
V(P)=\{a\}\cup X,\qquad a\in A,\qquad X=\{x_1,x_2,x_3\}\subseteq T.
\]
Every vertex of $B$ has a neighbor in $X$, since $A$ is anticomplete to $B$. Therefore
\begin{equation}
\sum_{x\in X}d_G(x,B)\ge|B|.
\end{equation}
We claim that
\[
A_0:=A\setminus N_G(X)\ne\varnothing.
\]
Assume otherwise. If $a$ is an endvertex of $P$, the three vertices of $X$ induce two edges. Hence
\[
\sum_{x\in X}d_G(x)\ge|A|+|B|+4>\frac{3n}{4},
\]
contrary to $X\subseteq T$. If $a$ is internal on $P$, then $a$ has two neighbors in $X$, while $G[X]$ has one edge. Thus
\[
\sum_{x\in X}d_G(x)\ge|A|+|B|+3>\frac{3n}{4},
\]
the same contradiction. Hence $A_0\ne\varnothing$.

Put
\[
B_2=\{b\in B:|N_G(b)\cap X|\ge2\},\qquad B_1=B\setminus B_2.
\]
Every vertex of $B_1$ has exactly one neighbor in $X$, and
\[
|B|+|B_2|\le\sum_{x\in X}d_G(x,B)<\frac{3n}{4}.
\]
Consequently,
\[
|B_1|=|B|-|B_2|>2|B|-\frac{3n}{4}>\frac n4.
\]
Suppose first that $a$ is an endvertex of $P$, say $P=ax_1x_2x_3$. If a vertex of $B_1$ has unique neighbor $x_1$ or $x_3$, it forms an induced $P_4$ with three vertices of $P$, while any vertex of $A_0$ is isolated from that path. Hence every vertex of $B_1$ has unique neighbor $x_2$, and
\[
d_G(x_2)\ge|B_1|>\frac n4,
\]
a contradiction.

Finally, suppose that $a$ is internal, say $P=x_1ax_2x_3$. We first claim that $A_0\subseteq N_G(a)$. Otherwise, take $u\in A_0\setminus N_G(a)$ and $b\in B_1$. According as the unique neighbor of $b$ in $X$ is $x_1,x_2$, or $x_3$, one of
\[
bx_1ax_2,\qquad bx_2ax_1,\qquad bx_3x_2a
\]
is an induced $P_4$ from which $u$ is isolated. Now take $u\in A_0$. Then $ua\in E(G)$, and $uax_2x_3$ is an induced $P_4$. Every vertex of $B$ must therefore be adjacent to $x_2$ or $x_3$. Hence
\[
|B|\le d_G(x_2)+d_G(x_3)<\frac n2,
\]
a contradiction.
\end{proof}

For the remainder of this subsection assume that $n>112$, and put
\[
h=\ceil{\frac n4},\qquad \lambda=\ceil{\frac n7},\qquad b=h-\lambda,\qquad \mu=\frac n2-\lambda-\frac n8.
\]
The following elementary inequalities will be used repeatedly:
\begin{equation}\label{eq:constants}
2b\ge\lambda,\qquad \lambda+1<\mu,\qquad n-\lambda-\frac n8>\frac n2,\qquad n-\lambda-1>\frac{3n}{4}-1.
\end{equation}
Indeed, $h\ge n/4$ and $\lambda\le n/7+1$, so
\[
2h-3\lambda\ge\frac n{14}-3>0,
\]
which gives $2b=2h-2\lambda\ge\lambda$, and
\[
\mu-(\lambda+1)\ge\frac{5n}{56}-3>0.
\]
Moreover,
\[
n-\lambda-\frac n8-\frac n2=\frac{3n}{8}-\lambda\ge\frac{13n}{56}-1>0,
\]
and
\[
n-\lambda-1-\left(\frac{3n}{4}-1\right)=\frac n4-\lambda\ge\frac{3n}{28}-1>0.
\]

The obstruction lemma can be turned into an amplification mechanism: when a separation occurs, the cograph core can be enlarged. The following lemma formalizes one enlargement step.

\begin{lemma}\label{lem:enlargement}
Suppose that
\[
V(G)=P\mathbin{\dot\cup}T\mathbin{\dot\cup}B\mathbin{\dot\cup}W
\]
satisfies the following conditions:
\begin{enumerate}[label=\textup{(\roman*)}]
\item every vertex of $P\cup B\cup W$ has degree at least $n/4$, while every vertex of $T$ has degree smaller than $n/4$;
\item $P$ is anticomplete to $B$;
\item $Z:=P\cup T$ is nonempty and $G[Z]$ is $P_4$-free;
\item $|W|<\mu$;
\item no cycle of order at least $\lambda$ contains every vertex of $Z$.
\end{enumerate}
Then either $G$ is Hamiltonian, or there is a new partition
\[
V(G)=P'\mathbin{\dot\cup}T\mathbin{\dot\cup}B'\mathbin{\dot\cup}W'
\]
satisfying conditions \textup{(i)}--\textup{(iii)} and
\[
|P'\cup T|\ge|P\cup T|+b.
\]
If $W=\varnothing$, the new partition also satisfies $|W'|<\mu$.
\end{lemma}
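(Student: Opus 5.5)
The plan is to reduce (v) to a failure of the Hu--Tian--Wei cycle lemma in the compressed auxiliary graph, and to read a small separation out of that failure. First, since $G$ is 7-tough and $G[Z]$ is $P_4$-free, Lemma~\ref{lem:cograph-cover} gives a cycle through $Z$. By (v) every such cycle has order less than $\lambda$, so $|Z|<\lambda$. Put $S=V(G)\setminus Z=B\cup W$; then $|S|\ge2$ and $G-S=G[Z]$ is nonempty and $P_4$-free. Lemma~\ref{lem:compression} supplies a linear forest $F$ on $S$ with $m\le n/8$ marked edges and the auxiliary graph $J=G[S]+E(F)$. Any cycle of $J$ through all marked edges with $|V(C_J)|\ge\lambda-|Z|$ would expand to a cycle of $G$ through $Z$ of order at least $\lambda$, contradicting (v). So Lemma~\ref{lem:HTW} must fail to give such a cycle. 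Every vertex of $S$ has degree at least $n/4$ in $G$ and loses fewer than $\lambda$ neighbours to $Z$, so $\delta(J)\ge h-\lambda=b$.

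If $J$ were $(m+2)$-connected, Lemma~\ref{lem:HTW} would give a cycle of order at least $\min\{|S|,\tfrac23\sigma_3(J)-m\}$. I would combine $\sigma_3(J)\ge3b$ with $2b\ge\lambda$ from \eqref{eq:constants} and the independence bound \eqref{eq:alpha} to show this order is at least $\lambda-|Z|$; this is a contradiction. This numerical step is delicate. If $\sigma_3$ alone is too weak, I would instead use that an independent triple in $J$ with small degree sum consists of vertices whose neighbourhoods lie largely in $Z$, and derive a contradiction with 7-toughness. So the expected outcome is that $\kappa(J)\le m+1\le n/8+1$. Let $Y$ be a minimum cut of $J$; since marked edges are edges of $J$, the components of $J-Y$ give a partition $S\setminus Y=A\,\dot\cup\,B^*$ with $A$ anticomplete to $B^*$ in $G$. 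I take $A$ to be the side meeting $P$'s neighbourhood, arranged so that $P$ is anticomplete to $B^*$ (using (ii) and moving vertices into $Y$ if needed).

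Each vertex of the smaller side has all its $G$-neighbours in that side, in $Y$, or in $Z$. Hence both sides have order at least $h-|Y|-\lambda$, which is roughly $b$ up to the additive term $|Y|$. I would remove that loss by absorbing $W$ and the low-overlap part of $Y$ into the new $W'$. In the residual case where both sides are large, i.e.\ exceed $2a-2$ and $a-1$ respectively, Lemma~\ref{lem:asymmetric} applied to the cutset $Y\cup Z$ of $G$ (after checking that the sides are components of $G-(Y\cup Z)$ unions) gives that $G$ is Hamiltonian. Otherwise one side, call it $B'$, is large: using $|W|<\mu$ and $n-\lambda-n/8>n/2$ from \eqref{eq:constants}, $|B'|>n/2$ and $|A|+|B'|>3n/4-1$.

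Lemma~\ref{lem:separation-obstruction}, applied with $T$ and the small side $A$, then shows that $G[A\cup T]$ is $P_4$-free. To preserve (iii) I need $G[A\cup P\cup T]$ to be $P_4$-free. I expect to get this by applying Lemma~\ref{lem:separation-obstruction} with $P\cup T$ in the role of $T$ and treating $P$ like $A$, since $P$ is anticomplete to $B'$ and has high degree. Then set $P'=P\cup A$, keep $T$, and put $W'=Y$. Conditions (i)--(iii) hold: the degree conditions are inherited, $P'$ is anticomplete to $B'$, and $Z'=P'\cup T$ is $P_4$-free. Moreover $|P'\cup T|\ge|Z|+|A|\ge|Z|+b$. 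When $W=\varnothing$, we have $|W'|\le|Y|\le n/8+1<\mu$, using $\lambda+1<\mu$ and $n/8<\mu-\lambda$. The main obstacle is the quantitative step $|A|\ge b$: controlling the vertices of $Y$ and $W$ so that the small side really gains $h-\lambda$ vertices rather than $h-\lambda-|Y|$. I would first try choosing $Y$ minimal and arguing, via the $P_4$ arguments of Lemma~\ref{lem:asymmetric}, that low-degree cut vertices can be moved into $A$.
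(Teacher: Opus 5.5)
Your skeleton (compression via Lemma~\ref{lem:compression}, then Lemma~\ref{lem:HTW} when the auxiliary graph is $(m+2)$-connected, otherwise a cut $U$ with $|U|\le m+1$, Lemma~\ref{lem:asymmetric} to force a small component, then Lemma~\ref{lem:separation-obstruction}) matches the paper's. However, the step you flag as the main obstacle is left open, and it is the heart of the lemma. A vertex of the small component $A$ only gives $|A|\ge h+1-|U|-|Z|$. With $|U|\le m+1$ and merely $|Z|\le\lambda-1$, this is $b-m+1$. Your suggested repairs do not touch the source of this loss. The cut lies in $B\cup W$, where every vertex has degree at least $n/4$. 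Moving a cut vertex into $A$ would in general destroy the anticompleteness of $A$ to the large side. Relabelling vertices into $W'$ does not change the degree count of a vertex of $A$.

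The missing idea is to couple $m$ with $|Z|$ through (v). The cycle through $Z$ from Lemma~\ref{lem:cograph-cover} has order at least $|Z|+s(G[Z])$ by Lemma~\ref{lem:scattering-cycle}. So (v) forces $|Z|+m\le\lambda$; if $s(G[Z])\le0$, then $m=1$ and $|Z|\le\lambda-1$. Hence every component $K$ of $G[B\cup W]-U$ has $|K|\ge h+1-|Z|-|U|\ge h-|Z|-m\ge h-\lambda=b$, which is exactly the required gain. The same inequality does several other jobs:
\begin{itemize}
\item It gives $|A|+|D|\ge n-\lambda-1$ and $|D|\ge n-\lambda-n/8$ for the union $D$ of the other components. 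Without it, $|A|+|D|>3n/4-1$ is not available for Lemma~\ref{lem:separation-obstruction}.
\item It settles your ``delicate'' connected case at once: the expansion has order at least $2h-|Z|-m\ge2h-\lambda\ge\lambda$. Even the trivial bound $m\le|Z|$ would suffice there.
\item It yields $|W'|=|U|\le\lambda<\mu$ when $W=\varnothing$. Your stated inequality $n/8<\mu-\lambda$ is in fact false, since $\mu-\lambda=3n/8-2\lambda\le5n/56$.
\end{itemize}

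The extension from $A\cup T$ to $A\cup P\cup T$ also does not work as proposed. Lemma~\ref{lem:separation-obstruction} cannot take $P\cup T$ in the role of $T$, because it requires every vertex of that set to have degree below $n/4$, and this fails on $P$. Also, $A$ is not chosen as ``the side meeting $P$'s neighbourhood''; it is the component with $|A|\le a-1$ forced by Lemma~\ref{lem:asymmetric}. The paper's route is as follows. Apply Lemma~\ref{lem:separation-obstruction} only to $(A,D,T)$, giving that $G[A\cup T]$ is $P_4$-free. Then argue directly. An induced $P_4$ in $G[A\cup Z]$ must meet both $A$ and $P$, so it has at most two vertices in $T$. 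The set $D\cap B$ is anticomplete to $A\cup P$, and by (iv) it satisfies $|D\cap B|\ge|D|-|W|>(n-\lambda-n/8)-\mu=n/2$. So it is not dominated by two vertices of degree below $n/4$, and an undominated vertex gives an induced $P_4\cup P_1$. Finally take $B'=D\cap B$ and $W'=U\cup(D\cap W)$. This choice, not moving vertices into the cut, is what makes $P'=P\cup A$ anticomplete to $B'$.
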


\begin{proof}
If $G$ is complete, the first alternative holds. We may therefore assume that $G$ is non-complete. Write $z=|Z|$ and define
\[
q=\begin{cases}
s(G[Z]),&\text{if }s(G[Z])\ge1,\\
1,&\text{if }s(G[Z])\le0.
\end{cases}
\]
Lemma~\ref{lem:cograph-cover} gives a cycle containing every vertex of $Z$. If $s(G[Z])\ge1$, Lemma~\ref{lem:scattering-cycle} shows that such a cycle has order at least $z+q$, and assumption \textup{(v)} gives $z+q\le\lambda-1$. If $s(G[Z])\le0$, then $q=1$; assumption \textup{(v)} gives $z<\lambda$, and since $z$ and $\lambda$ are integers, $z\le\lambda-1$, whence $z+q\le\lambda$. Thus, in both cases,
\begin{equation}\label{eq:zq}
z+q\le\lambda.
\end{equation}
Put $Y=V(G)\setminus Z=B\cup W$. Since $z<\lambda<n-2$, the set $Y$ has at least two vertices. Apply Lemma~\ref{lem:compression} with $S=Y$. We obtain a marked linear forest $F$ on $Y$ with $q$ edges, whose assigned path interiors partition $Z$; in particular,
\[
q\le a.
\]
Since $z<\lambda$ and $n>112$, we have
\[
|Y|=n-z>n-\lambda>a+2\ge q+2.
\]
Let
\[
R=G[Y],\qquad H=R+E(F).
\]
Every vertex of $Y$ has degree at least $h$, and hence
\begin{equation}\label{eq:dR}
d_R(v)\ge h-z\qquad(v\in Y).
\end{equation}

Suppose first that $R$ is $(q+2)$-connected. Then $H$ is also $(q+2)$-connected. If $\alpha(H)\le2$, then $\sigma_3(H)=\infty$, and Lemma~\ref{lem:HTW} gives a Hamiltonian cycle of $H$ containing every marked edge; its expansion is Hamiltonian in $G$. We may therefore assume that $\alpha(H)\ge3$. Then

\[
\sigma_3(H)\ge3(h-z).
\]
Lemma~\ref{lem:HTW} gives a cycle $C_H$ of $H$ through all marked edges and of order at least
\[
\min\{|Y|,2(h-z)-q\}.
\]
If $|Y|\le2(h-z)-q$, this lower bound forces $C_H$ to be Hamiltonian in $H$, and its expansion is Hamiltonian in $G$. Otherwise $2(h-z)-q<|Y|$, so either $C_H$ is Hamiltonian or its expansion has order at least
\[
2(h-z)-q+z=2h-z-q\ge2h-\lambda\ge\lambda,
\]
where \eqref{eq:zq} was used. The expanded cycle contains $Z$, so every vertex outside it has degree at least $n/4$. Lemma~\ref{lem:long-completion} gives a Hamiltonian cycle.

We may therefore assume that $R$ is not $(q+2)$-connected. Since $|Y|>q+2$, there is a set $U\subseteq V(R)$ such that
\begin{equation}\label{eq:Ucut}
|U|\le q+1\qquad\text{and}\qquad c(R-U)\ge2.
\end{equation}
Indeed, take $U=\varnothing$ if $R$ is disconnected, and otherwise take a vertex cut of order at most $q+1$. For every component $K$ of $R-U$, choose $v\in V(K)$. By \eqref{eq:dR}, \eqref{eq:Ucut}, and \eqref{eq:zq},
\begin{equation}\label{eq:Klower}
|K|\ge d_R(v)-|U|+1\ge h-z-q\ge b.
\end{equation}
Moreover,
\begin{equation}\label{eq:RminusU}
|V(R-U)|=n-z-|U|\ge n-z-q-1\ge n-\lambda-1.
\end{equation}
Apply Lemma~\ref{lem:asymmetric} to the cutset
\[
X:=Z\cup U.
\]
Indeed,
\[
G-X=G[Y]-U=R-U,
\]
so the components of $G-X$ are precisely the components of $R-U$. If every component of $R-U$ has order greater than $a-1$, Lemma~\ref{lem:asymmetric} yields a Hamiltonian cycle. Indeed, if there are at least three components, one component has order greater than $a-1$ and two others have total order greater than $2a-2$. If there are exactly two components, the larger has order greater than $2a-2$, since
\[
\frac{n-\lambda-1}{2}>2a-2.
\]
Thus some component $A$ of $R-U$ satisfies
\begin{equation}
|A|\le a-1.
\end{equation}
Let $D$ be the union of all other components. Then
\begin{equation}\label{eq:Dlarge}
|D|\ge n-z-|U|-|A|\ge n-z-q-a\ge n-\lambda-a>\frac n2,
\end{equation}
and, by \eqref{eq:RminusU},
\[
|A|+|D|>\frac{3n}{4}-1.
\]
The sets $A,D,T$ satisfy the hypotheses of Lemma~\ref{lem:separation-obstruction}; hence
\begin{equation}\label{eq:ATcograph}
G[A\cup T]\text{ is }P_4\text{-free}.
\end{equation}

We next show that $G[A\cup Z]$ is $P_4$-free. Suppose that it contains an induced $P_4$. By \eqref{eq:ATcograph} and the assumption that $G[Z]=G[P\cup T]$ is $P_4$-free, this path contains a vertex of $A$ and a vertex of $P$, and therefore at most two vertices of $T$. Since $D\subseteq B\cup W$, the stronger estimate in \eqref{eq:Dlarge} and the definition of $\mu$ give
\[
\begin{aligned}
|D\cap B|
  &\ge |D|-|W|\\
  &>(n-\lambda-a)-\left(\frac n2-\lambda-a\right)\\
  &=\frac n2.
\end{aligned}
\]
The set $D\cap B$ is anticomplete to $A$, because $A$ and $D$ are unions of different components of $R-U$, and it is anticomplete to $P$, because $P$ is anticomplete to $B$. The union of the neighborhoods of at most two vertices of $T$ has order smaller than $n/2$. Hence some vertex of $D\cap B$ is anticomplete to the induced path, a contradiction. Thus

\begin{equation}\label{eq:AZcograph}
G[A\cup Z]\text{ is }P_4\text{-free}.
\end{equation}
Define
\[
P'=P\cup A,\qquad B'=D\cap B,\qquad W'=U\cup(D\cap W).
\]
These sets, together with $T$, form a partition of $V(G)$. Every vertex of $P'\cup B'\cup W'$ has degree at least $n/4$, and $P'$ is anticomplete to $B'$. By \eqref{eq:AZcograph}, the new core $P'\cup T$ is $P_4$-free. Finally, \eqref{eq:Klower} gives
\[
|P'\cup T|=|Z|+|A|\ge|Z|+b.
\]
If $W=\varnothing$, then $W'=U$ and
\[
|W'|\le q+1\le\lambda-z+1\le\lambda+1<\mu
\]
by \eqref{eq:zq} and \eqref{eq:constants}.
\end{proof}

\section{Proof of the main theorem}

\begin{proof}[Proof of Theorem~\ref{thm:main}]
Let $G$ be a 7-tough $(P_4\cup P_1)$-free graph on $n\ge3$ vertices. Suppose, for a contradiction, that $G$ is not Hamiltonian. We may assume that $G$ is not complete. By the elementary toughness bounds and \eqref{eq:alpha},
\[
\kappa(G)\ge14,\qquad \alpha(G)\le\frac n8.
\]
If $n\le112$, then
\[
\alpha(G)\le\floor{\frac n8}\le14\le\kappa(G),
\]
and Lemma~\ref{lem:CE} gives a Hamiltonian cycle. Hence
\begin{equation}
n>112.
\end{equation}
Put
\[
h=\ceil{\frac n4},\qquad \lambda=\ceil{\frac n7},\qquad b=h-\lambda,\qquad \mu=\frac n2-\lambda-\frac n8,
\]
and define
\[
S=\{v\in V(G):d_G(v)\ge n/4\},\qquad T=V(G)\setminus S.
\]
As observed by Shan~\cite[Claim~3.1]{Shan2026},
the graph $G[T]$ is $P_4$-free.

Suppose first that $T=\varnothing$. Then $\delta(G)\ge h$. If $\alpha(G)\le2$, the bound $\kappa(G)\ge14$ gives
\[
\kappa(G)\ge\alpha(G),
\]
so Lemma~\ref{lem:CE} gives a Hamiltonian cycle. If $\alpha(G)\ge3$, then $\kappa(G)\ge14\ge2$, and Lemma~\ref{lem:HTW}, again with the empty forest, gives a cycle of order at least
\[
\min\set{n,\frac23\sigma_3(G)}.
\]
If the minimum is $n$, the cycle is Hamiltonian. Otherwise its order is at least
\[
\frac23\sigma_3(G)\ge2h\ge\lambda.
\]
All vertices outside the latter cycle have degree at least $n/4$, so Lemma~\ref{lem:long-completion} applies. Thus $T\ne\varnothing$.

Start with the partition
\[
P_0=\varnothing,\qquad B_0=S,\qquad W_0=\varnothing,\qquad Z_0=T.
\]
It satisfies conditions \textup{(i)}--\textup{(iv)} of Lemma~\ref{lem:enlargement}. If there is a cycle of order at least $\lambda$ containing every vertex of $Z_0$, then every vertex outside the cycle belongs to $S$, and Lemma~\ref{lem:long-completion} gives a Hamiltonian cycle. Hence we may assume that no such cycle exists.

Apply Lemma~\ref{lem:enlargement}. Unless $G$ is already Hamiltonian, it yields a partition
\[
V(G)=P_1\mathbin{\dot\cup}T\mathbin{\dot\cup}B_1\mathbin{\dot\cup}W_1
\]
that satisfies conditions \textup{(i)}--\textup{(iii)} of that lemma and has the $P_4$-free core
\[
Z_1=P_1\cup T,
\qquad
|Z_1|\ge|T|+b.
\]
Because the initial exceptional set is $W_0=\varnothing$, the final assertion of Lemma~\ref{lem:enlargement} also gives
\[
|W_1|<\mu,
\]
which is condition \textup{(iv)} for the new partition.

If there is a cycle of order at least $\lambda$ containing $Z_1$, Lemma~\ref{lem:long-completion} completes it, because $T\subseteq Z_1$ contains every low-degree vertex. We may therefore assume that no such cycle exists; this is precisely condition \textup{(v)}. Thus the partition $(P_1,T,B_1,W_1)$ satisfies every hypothesis of Lemma~\ref{lem:enlargement}, and the lemma may be applied a second time. Unless $G$ is Hamiltonian, the second application produces a partition satisfying conditions \textup{(i)}--\textup{(iii)} and a $P_4$-free core $Z_2=P_2\cup T$ with

\[
|Z_2|\ge|Z_1|+b\ge|T|+2b\ge2b\ge\lambda,
\]
where the last inequality is \eqref{eq:constants}.

By Lemma~\ref{lem:cograph-cover}, there is a cycle containing every vertex of $Z_2$. Its order is at least $|Z_2|\ge\lambda$. Since $Z_2$ contains $T$, every vertex outside this cycle has degree at least $n/4$. Lemma~\ref{lem:long-completion} yields a Hamiltonian cycle, the final contradiction.
\end{proof}

\begin{remark}
The constant 7 enters the proof through the two-step enlargement. Each small cut in the compressed high-degree graph enlarges the prescribed cograph core by at least
\[
b=\ceil{\frac n4}-\ceil{\frac n7}.
\]
For $n>112$, two enlargements give $2b\ge\ceil{n/7}$, the threshold required in Lemma~\ref{lem:long-completion}. The corresponding estimates at toughness 6 do not meet the asymmetric two-side criterion used here.
\end{remark}

\section*{Declaration of competing interest}
The authors declare that they have no known competing financial interests or personal relationships that could have appeared to influence the work reported in this paper.

\section*{Data availability}
No data were used for the research described in this article.

\end{document}